\newcommand*\arrowoffset{0.4pt}
\pgfpoint{\arrowoffset}{0.25\pgfutil@tempdima}}
\pgfpoint{\arrowoffset}{-0.25\pgfutil@tempdima}}
\tikzset{morphlabel/.style={draw=black, thin, rectangle, minimum width=7pt, fill=white, font=\scriptsize}}
\tikzset{double arrow scope/.style={every path/.style={double, -new double arrowhead}}}
\newcommand\selectpart[2][\selectcolour]{\node [draw, fit=#2, inner sep=0.8*\cobordismlinewidth, #1, line width=\cobordismlinewidth] {};}
\newlength{\cobordismlinewidth}	
\newcommand\xdoubleto[1]{\mathbin{\begin{tikzpicture}[baseline={([yshift=-3pt]
current bounding box.south)}]
    \node (A) at (0,0) [inner xsep=0pt, inner ysep=1pt, minimum width=0.2cm] {\ensuremath{ #1 \strut}};
    \draw [double,-new double arrowhead]
        ([xshift=-2.5pt] A.south west)
        to ([xshift=3pt] A.south east);
\end{tikzpicture}}}
\theoremstyle{plain}
\newtheorem{theorem}{Theorem}
\newtheorem{lemma}[theorem]{Lemma}
\newtheorem{proposition}[theorem]{Proposition}
\theoremstyle{definition}
\newtheorem{defn}[theorem]{Definition}
\newenvironment{tz}[1][]{\begin{aligned}\begin{tikzpicture}[#1]}{\end{tikzpicture}\end{aligned}}
\newcommand{\id}{\ensuremath{\mathrm{id}}}
\newcommand{\bicat}[1]{\ensuremath{\mathbf{#1}}}
\newcommand\Hom{\ensuremath{\mathrm{Hom}}}
\begin{document}

\title{Quasistrict symmetric monoidal 2-categories via wire diagrams}

\author[B. Bartlett]{Bruce Bartlett}
\address{Department of Mathematics, University of Stellenbosch}
\curraddr{Mathematical Institute, University of Oxford}
\email{bartlett@maths.ox.ac.uk}

\tikzset{every picture/.style={yscale=0.7}}

\begin{abstract} In this paper we give an expository account of quasistrict symmetric monoidal 2-categories as introduced by Schommer-Pries. We reformulate the definition using a graphical calculus called wire diagrams, which facilitates computations and emphasizes the central role played by the interchangor coherence isomorphisms.

\end{abstract}
\maketitle

\section{Introduction}

Establishing the definition of a symmetric monoidal bicategory, and proving associated coherence and strictification results, has been a considerable effort by a number of authors~\cite{kv94-bm2, kv94-2categories, BN96, ds97-mbh, cr98-gcb, bl98-2t, GPS95, gurskithesis, gur11-lsc, go13-ils, g13-ctd, m00-bc, st13-ccb}; see also the references in~\cite{CSPthesisLatest}. Recently, Schommer-Pries has defined a stricter version of a symmetric monoidal bicategory, called a {\em quasistrict symmetric monoidal 2~-category}, and proved the following strictification result:

\begin{theorem}[\cite{CSPthesisLatest}] Every symmetric monoidal bicategory is equivalent to a quasistrict symmetric monoidal 2~-category. \label{csp_qs_theorem}
\end{theorem}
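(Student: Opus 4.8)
The plan is to establish Theorem~\ref{csp_qs_theorem} by reducing it to two already-available strictification theorems and then bridging the gap between them. First I would invoke the coherence theorem of Gurski--Osorno (building on Gordon--Power--Street and Gurski) which states that every symmetric monoidal bicategory is equivalent, as a symmetric monoidal bicategory, to a \emph{semistrict} one---that is, one whose underlying bicategory is a strict $2$-category and whose monoidal structure is a strict (cubical) functor, with only the braiding and syllepsis/symmetry data remaining weak. This handles the hardest classical analytic content: rectifying associativity and unit constraints of the tensor product, and making the pentagonator, the two hexagonators, and the various modifications trivial wherever possible. The remaining task is purely structural: to further normalize such a semistrict symmetric monoidal $2$-category into the quasistrict form, where in addition the interchangor (the comparison $2$-cell between the two ways of whiskering a tensor of $1$-morphisms) and the naturality $2$-cells of the braiding are controlled by the axioms listed in Schommer-Pries's definition.

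The key steps, in order, would be: (1) recall the precise definition of a quasistrict symmetric monoidal $2$-category and identify exactly which coherence data it permits to be nontrivial (the interchangors, the braiding on objects and its behaviour on $1$- and $2$-morphisms, and the symmetry $2$-cell $\beta_{x,y}\circ\beta_{y,x}\Rightarrow\id$); (2) starting from a semistrict model, observe that the tensor of $1$-morphisms $(f\otimes g)$ need not equal $(f\otimes \id)\circ(\id\otimes g)$ on the nose, but the two are canonically isomorphic via an interchangor $2$-cell, and re-present the data of the semistrict structure entirely in terms of these interchangors, checking that the semistrictness axioms translate into the quasistrict axioms; (3) verify that the braiding of the semistrict model, restricted appropriately, satisfies the simplified naturality and hexagon-type conditions demanded by quasistrictness, possibly after absorbing invertible modifications into redefinitions of the structure $2$-cells; (4) check that the resulting $2$-category together with this repackaged data is indeed quasistrict and that the comparison with the original semistrict model---hence with the original symmetric monoidal bicategory---is a symmetric monoidal biequivalence.

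Because the paper's stated goal is expository and its main tool is the wire-diagram calculus, I expect the proof actually presented to sidestep much of step (1)'s abstract bookkeeping by doing the normalization diagrammatically: a semistrict structure gives rise to wire diagrams in which crossings (braidings) and the small ``beads'' recording interchangors appear, and the quasistrict axioms are precisely the local moves (Reidemeister-type relations, naturality squares, and the interchange/coherence beads sliding past crossings) that these diagrams must satisfy. So I would reformulate steps (2)--(4) as: show that every relation holding in a semistrict symmetric monoidal $2$-category can be derived from the finite list of wire-diagram moves constituting the quasistrict axioms, and conversely that those moves hold in the semistrict model.

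The main obstacle I anticipate is step (2)--(3): correctly tracking the interchangor $2$-cells through the braiding. In a semistrict model the interchangor is an extra piece of data (a cubical-functor constraint) that is \emph{not} the identity, and the quasistrict axioms impose compatibility between the interchangor and the braiding (how a bead slides through a crossing, and how two beads on opposite strands interact). Verifying these compatibilities amounts to a careful diagram chase through the semistrict coherence axioms, and the bookkeeping---keeping orientations, the order of whiskering, and the direction of each $2$-cell straight---is where all the real work lies; everything else is either quoted from Gurski--Osorno or is a formal consequence of biequivalence-invariance of the structures involved.
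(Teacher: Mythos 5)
The paper does not prove Theorem~\ref{csp_qs_theorem}: it is quoted from Schommer-Pries's thesis \cite{CSPthesisLatest}, and the paper's own results (Propositions~\ref{stringentprop} and~\ref{stringentsymmprop}) only establish that the \emph{definition} of a quasistrict symmetric monoidal 2-category can be repackaged in ``stringent'' form. So there is no in-paper proof to compare against, and I can only assess your strategy on its own terms.

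Your outline has a genuine gap at precisely the step you dismiss as ``purely structural.'' The paper itself flags what separates Gurski--Osorno's semistrict notion from Schommer-Pries's quasistrict one: in the former, the braiding naturators $\beta_{f,g}$, the bilinearators $R_{A,B|C}$ and $S_{A|B,C}$, and the symmetry 2-cells $\sigma_{A,B}\colon \id_{A\otimes B}\Rightarrow \beta_{B,A}\circ\beta_{A,B}$ are genuinely nontrivial data, and the quasistrict axioms (QS.1)--(QS.3) demand that they be \emph{identities}. That cannot be achieved by ``absorbing invertible modifications into redefinitions of the structure 2-cells'' on the same underlying 2-category: for $\sigma_{A,B}$ to become the identity you need the equation of 1-morphisms $\beta_{B,A}\circ\beta_{A,B}=\id_{A\otimes B}$ to hold on the nose, and no relabelling of 2-cells turns an isomorphism between two distinct 1-morphisms into an equality; the same objection applies to $R$ and $S$, which compare $\beta_{A\otimes B,C}$ with a genuinely different composite. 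This is exactly why the theorem is phrased as holding ``at the cost of passing to an equivalent symmetric monoidal bicategory'': the substantive content, which your plan does not supply, is the construction of a \emph{new} equivalent 2-category (in Schommer-Pries's argument, via presentations by generators and relations together with a coherence theorem) on which these equations can be imposed. The part of your outline that is correct and formal --- re-expressing the tensorator through interchangors and verifying the wire-diagram moves --- is what the paper actually carries out, but it proves the equivalence of two \emph{definitions}, not the strictification Theorem~\ref{csp_qs_theorem} itself.
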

\noindent In this paper we give an expository account of this result, by introducing  a graphical notation which we call {\em wire diagrams}. The utility of this notation is twofold. Firstly, wire diagrams offer a simple visual explanation for what is going on. Secondly, wire diagrams facilitate {\em working} with these structures and making actual {\em computations}. In fact, this was the motivation for the coherence result above. As part of a project related to three~-dimensional topological quantum field theory, we found ourselves working in a symmetric monoidal bicategory presented by generators and relations~\cite{PaperI, PaperII, PaperIII}. The calculations involved were all expressed in this graphical calculus, and it would have been intractable to perform them without it. 

Ordinary algebra is about manipulating a string of symbols on a line. One can think of algebraic manipulations in a symmetric monoidal bicategory as being a form of {\em stable 3~-dimensional algebra}. Wire diagrams are one possible notation for this. The basic idea is that the tensor product direction runs out of the page, composition of 1\-morphisms and (horizontal) composition of 2\-morphisms runs up the page, and (vertical) composition of 2\-morphisms runs from left-to-right\footnote{Unfortunately, what is usually called {\em vertical} composition $\circ$ of 2\-morphisms runs horizontally in wire diagrams, and what is usually called {\em horizontal} composition $\ast$ runs vertically! }:

\[
\begin{tz}[scale=1.3]
	\draw[->] (0,0) to (0,1)  node[above] {\footnotesize 1\-morphisms} ;
	\draw[->] (0,0) to  (-150:1) node[below] {\footnotesize tensor product};
	\draw[->] (0,0) to node[xshift=-0.3cm, below right] {\footnotesize 2\-morphisms} (1,0);
\end{tz}
\qquad \begin{aligned}
\includegraphics{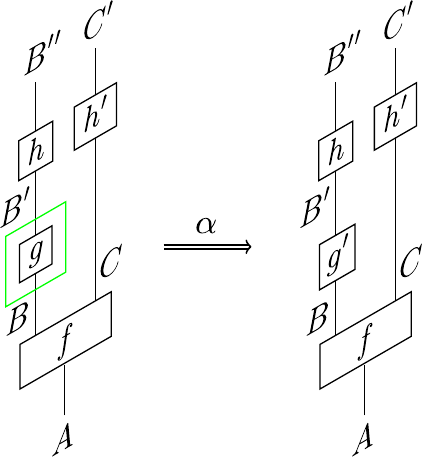} \end{aligned}
\]
To make such a diagram clearer, it will usually just be drawn flat in the page (but the three-dimensional picture should be kept in mind), like this:
\[
\begin{tz}[xscale=3, yscale=5.5]
\node (1) at (0,0)
{
$\begin{tz}[xscale=0.7, yscale=0.6]
  \draw(0,0) to (0,2.5) node[morphlabel] (g) {$g$} to (0,4) node[morphlabel] (h) {$h$} to (0,5);
  \draw(1,0) to (1,4) node[morphlabel] {$h'$} to (1,5);
\node[morphlabel] at (0.5, 1) {$\,\,\,\, \,\,\, f \,\,\,\, \,\,\,$};
	\draw[green] ([xshift=-4pt, yshift=4pt] g.north west) rectangle ([xshift=4pt, yshift=-4pt] g.south east);
\end{tz}$
};

\node (2) at (1,0)
{
$\begin{tz}[xscale=0.7, yscale=0.6]
  \draw(0,0) to (0,2.5) node[morphlabel] (g') {$g'$} to (0,4) node[morphlabel] (h) {$h$} to (0,5);
  \draw(1,0) to (1,4) node[morphlabel] {$h'$} to (1,5);
\node[morphlabel]  at (0.5, 1) {$\,\,\,\, \,\,\, f \,\,\,\, \,\,\,$};
\end{tz}$
};
 
\begin{scope}[double arrow scope]
	\draw ([xshift=0.1em] 1.east) -- node[above] {$\alpha$} ([xshift=-0.1em] 2.west);
\end{scope}
\end{tz}
\]
The coloured box above indicates where the 2-morphism $\alpha$ is acting. We will explain this notation as we go along. 

Before the result of Schommer-Pries (Theorem \ref{csp_qs_theorem}), the most powerful strictification result for symmetric monoidal bicategories was the result of Gurski and Osorno~\cite{go13-ils}. They proved that every symmetric monoidal bicategory is equivalent to a {\em semistrict symmetric monoidal 2-category}. Besides the tensorator 2-isomorphisms 
\begin{equation} \label{t11}
  \Phi_{(f',g'),(f,g)} \colon (f'\otimes g') \circ (f \otimes g) \Rightarrow f'f \otimes g'g
\end{equation}
coming from the underlying semistrict monoidal 2-category, a semistrict symmetric monoidal 2-category has a host of additional coherence data: the braiding `naturators' $\beta_{f,g}$, and the braiding `bilinearators' $R_{A,B|C}$ and $S_{A | B,C}$ (see~\cite{go13-ils}). Theorem \ref{csp_qs_theorem} says that these latter coherence isomorphisms can be made into identities, at the cost of passing to an equivalent symmetric monoidal bicategory. 

To underscore this point of view, we will introduce a slimmed-down variant of the definition of a quasistrict symmetric monoidal 2-category, which we call a {\em stringent symmetric monoidal 2-category}. The `stringent' definition is equivalent to the `quasistrict' one, but does not explicitly contain redundant data, such as the braiding naturators $\beta_{f,g}$. Moreover it does not refer to the full 4-variable tensorator \eqref{t11}, but only to the underlying {\em interchangor} of $\Phi$, 
\[
\phi_{f,g} \colon (f \otimes \id_{B'}) \circ (\id_{A} \otimes g) \Rightarrow (\id_{A'} \otimes g) \circ (f \otimes \id_{B})
\]
drawn in wire diagrams as follows:
\[
\begin{tz}[xscale=1.3] 
\node (1) at (0,0)
{
$\begin{tz}
  \draw(0,0) node[below] {$A$} to (0,2) node[morphlabel] (f) {$f$} to (0,3) node[above] {$A'$};
  \draw(1,0) node[below] {$B$} to (1,1) node[morphlabel] (g) {$g$} to (1,3) node[above] {$B'$};
\end{tz}$
};  

\node (2) at (2,0)
{
$\begin{tz}
  \draw(0,0) node[below] {$A$} to (0,1) node[morphlabel] (f) {$f$} to (0,3) node[above] {$A'$};
  \draw(1,0) node[below] {$B$} to (1,2) node[morphlabel] (g) {$g$} to (1,3) node[above] {$B'$};
\end{tz}$
};  

\begin{scope}[double arrow scope]
	\draw (1) -- node[above] {$\phi_{f,g}$} (2);
\end{scope}

\end{tz}
\]
Slimming down the definition of quasistrict symmetric monoidal 2-category in this way makes it more suitable for a diagrammatic calculus, as well as, we hope, psychologically more pleasant. However, we view this distinction between the `stringent' and `quasistrict' forms of the definition as only a technical one, which is explicitly made in this paper for the purpose of precision; other authors may choose not to make this distinction, leaving it implicitly understood.

This paper is structured as follows. In Section \ref{wire_diag} we introduce wire diagrams in the familiar setting of 2-categories. In Section \ref{semistrict_sec} we review semistrict monoidal 2-categories. In Section \ref{stringent_sec} we introduce stringent monoidal 2-categories, extend the wire diagram notation to this setting, and prove that a stringent monoidal 2-category is the same thing as a semistrict monoidal 2-category. In Section \ref{stringent_sym_sec} we introduce stringent symmetric monoidal 2-categories, extend the wire diagram notation to this setting, and prove that a stringent symmetric monoidal 2-category is the same thing as a quasistrict symmetric monoidal 2-category. 

\subsection*{Notation} We will use the convention that `2-category' refers to a {\em strict} bicategory. Bicategories and 2-categories $\bicat{M}$ will be written in bold font and categories $C$ in plain font. 

\subsection*{Remark} The wire diagram notation can be extended in a straightforward way to give a natural graphical calculus for semistrict braided monoidal 2-categories (in the sense of~\cite{kv94-bm2, cr98-gcb, BN96, gur11-lsc} too, though we do not do this here.

\section{Wire diagrams for 2-categories} \label{wire_diag}
In this section we introduce wire diagrams in the setting of 2-categories. 

Let $\bicat{M}$ be a 2-category. The objects $A, B, \ldots$ of $\bicat{M}$ are drawn as:
 \[
  \begin{tz} \draw(0,0) node[below] {$A$} to (0,1) node[above] {$A$}; \end{tz} 
 \]
A 1-morphism $f \colon A \rightarrow B$ is drawn as:
 \[
  \begin{tz} \draw(0,0) node[below] {$A$} to  node[morphlabel, draw=black] {$f$} (0,2) node[above] {$B$}; \end{tz} 
 \] 
Note that composition of 1-morphisms runs from {\em bottom} to {\em top}! If $f,g\colon A \Rightarrow B$ are 1-morphisms, then a 2-morphism $\alpha  \colon f \Rightarrow g$ is drawn as:
\[
\begin{tz}[xscale=1.3] 
\node (1) at (0,0)
{
$\begin{tz}
  \draw(0,0) node[below] {$A$} to  node[morphlabel] {$f$} (0,2) node[above] {$B$}; 
\end{tz}$
};  

\node (2) at (1,0)
{
$\begin{tz}
  \draw(0,0) node[below] {$A$} to  node[morphlabel] {$g$} (0,2) node[above] {$B$}; 
\end{tz}$
};  

\begin{scope}[double arrow scope]
	\draw (1) -- node[above] {$\alpha$} (2);
\end{scope}
\end{tz}
 \] 
If $f \colon A \rightarrow B$ and $g \colon B \rightarrow C$ are 1-morphisms, then their composite $g \circ f : A \rightarrow C$ is drawn by stacking them on top of each other:
\[
\begin{tz}
  \draw(0,0) node[below] {$A$} to  node[morphlabel] {$g \circ f$} (0,3.5)  node[above] {$B$}; 
\end{tz}
\quad \equiv \quad
\begin{tz}
  \draw(0,0) node[below] {$A$} to (0,1) node[morphlabel] {$f$} to node[right] {$B$} (0,2.5) node[morphlabel] {$g$} to (0,3.5) node[above] {$C$};
\end{tz}
\]
Usually in a 2-category, we think of there being two composition laws for 2-morphisms: {\em horizontal} and {\em vertical} composition. In wire diagrams we will single out `vertical composition' as the primary operation (which we will just call {\em composition} of 2-morphisms for simplicity), and describe horizontal composition in terms of whiskering. So, if $\alpha \colon f \Rightarrow g$ and $\beta \colon g \Rightarrow h$ are 2-morphisms, then their composite $\beta \circ \alpha \colon f \Rightarrow h$ is drawn as:
\[
\begin{tz}[xscale=1.3] 
\node (1) at (0,0)
{
$\begin{tz}
  \draw(0,0) node[below] {$A$} to  node[morphlabel] {$f$} (0,2) node[above] {$B$}; 
\end{tz}$
};  

\node (2) at (1,0)
{
$\begin{tz}
  \draw(0,0) node[below] {$A$} to  node[morphlabel] {$g$} (0,2) node[above] {$B$}; 
\end{tz}$
};  

\node (3) at (2,0)
{
$\begin{tz}
  \draw(0,0) node[below] {$A$} to  node[morphlabel] {$h$} (0,2) node[above] {$B$}; 
\end{tz}$
};  

\begin{scope}[double arrow scope]
	\draw (1) -- node[above] {$\alpha$} (2);
	\draw (2) -- node[above] {$\beta$} (3);
\end{scope}
\end{tz}
\]
Suppose $A \stackrel{f_1}{\rightarrow} B \stackrel{f_2}{\rightarrow} C \stackrel{f_3} \rightarrow D$ are a composable triple of 1-morphisms, and that $\alpha \colon f_2 \Rightarrow g$ is a 2-morphism. Then as usual we can {\em whisker} $\alpha$ with the identity 2-morphisms on $f_1$ and $f_3$ respectively to obtain 
\[
\id_{f_3} \ast \alpha \ast \id_{f_1} : f_3 \circ f_2 \circ f_1 \Rightarrow f_3 \circ g \circ f_1.
\]
Whiskering is drawn by enclosing the source of the 2-morphism with a box. So, the diagram 
\[
\begin{tz}[xscale=1.5] \tikzset{every picture/.style={yscale=0.9}}
\node (1) at (0,0)
{
$\begin{tz}
  \draw(0,0) node[below] {$A$} to (0,1) node[morphlabel] {$f_1$} to node[right] {$B$} (0,2.5) node[morphlabel] (A) {$f_2$} to node[right] {$C$} (0,4) node[morphlabel] {$f_3$} to (0,5) node[above] {$D$};
  \selectpart[green, inner sep=2pt]{(A)};
\end{tz}$
};  

\node (2) at (1,0)
{
$\begin{tz}
  \draw(0,0) node[below] {$A$} to (0,1) node[morphlabel] {$f_1$} to node[right] {$B$} (0,2.5) node[morphlabel] {$g$} to node[right] {$C$} (0,4) node[morphlabel] {$f_3$} to (0,5) node[above] {$D$};
\end{tz}$
};  

\begin{scope}[double arrow scope]
	\draw (1) -- node[above] {$\alpha$} (2);
\end{scope}
\end{tz}
\]
stands for the 2-morphism $\id_{f_3} \ast \alpha \ast \id_{f_1}$. As is well known, the usual `horizontal composition' of 2-morphisms in a 2-category can be described solely in terms of `vertical composition', and whiskering. So, if $f_1, g_1 \colon A \rightarrow B$ and $f_2, g_2 \colon B \rightarrow C$ are 1-morphisms, and $\alpha \colon f_1 \Rightarrow g_1$ and $\beta \colon f_2 \Rightarrow g_2$ are 2-morphisms, then we have:
\begin{equation} \label{whisker_1}
 \beta \ast \alpha = (\beta \ast \id_{f_1}) \circ (\id_{f_2} \ast \alpha) = (\id_{f_2} \ast \alpha) \circ (\beta \ast \id_{f_1})
\end{equation}
We can view this equation as {\em defining} horizontal composition. In wire diagrams, we will draw $\beta \ast \alpha$ as if it is being applied simultaneously. So, the equations \eqref{whisker_1} look as follows in wire diagrams:
\begin{eqnarray*} \tikzset{every picture/.style={yscale=0.9}}
\begin{tz}[xscale=1.5] 
\node (1) at (0,0)
{
$\begin{tz}
  \draw(0,0) node[below] {$A$} to (0,1) node[morphlabel] (A) {$f_1$} to node[right] {$B$} (0,2.5) node[morphlabel] (B) {$f_2$} to (0,3.5) node[above] {$C$};
  \selectpart[green, inner sep=2pt]{(A)};
  \selectpart[blue, inner sep=2pt]{(B)};
\end{tz}$
};  

\node (2) at (1,0)
{
$\begin{tz}
  \draw(0,0) node[below] {$A$} to (0,1) node[morphlabel] (A) {$g_1$} to node[right] {$B$} (0,2.5) node[morphlabel] (B) {$g_2$} to (0,3.5) node[above] {$C$};
\end{tz}$
};  

\begin{scope}[double arrow scope]
	\draw (1) -- node[above, blue] {$\beta$} node[below, green] {$\alpha$} (2);
\end{scope}
\end{tz}
& := & \tikzset{every picture/.style={yscale=0.9}}
\begin{tz}[xscale=1.5] 
\node (1) at (0,0)
{
$\begin{tz}
  \draw(0,0) node[below] {$A$} to (0,1) node[morphlabel] (A) {$f_1$} to node[right] {$B$} (0,2.5) node[morphlabel] (B) {$f_2$} to (0,3.5) node[above] {$C$};
  \selectpart[green, inner sep=2pt]{(A)};
\end{tz}$
};  

\node (2) at (1,0)
{
$\begin{tz}
  \draw(0,0) node[below] {$A$} to (0,1) node[morphlabel] (A) {$g_1$} to node[right] {$B$} (0,2.5) node[morphlabel] (B) {$g_2$} to (0,3.5) node[above] {$C$};
  \selectpart[blue, inner sep=2pt]{(B)};
\end{tz}$
};  

\node (3) at (2,0)
{
$\begin{tz}
  \draw(0,0) node[below] {$A$} to (0,1) node[morphlabel] (A) {$g_1$} to node[right] {$B$} (0,2.5) node[morphlabel] (B) {$g_2$} to (0,3.5) node[above] {$C$};
\end{tz}$
};

\begin{scope}[double arrow scope]
	\draw (1) -- node[above, green] {$\alpha$} (2);
	\draw (2) -- node[above, blue] {$\beta$} (3);
\end{scope}
\end{tz} \\
&=& \begin{tz}[xscale=1.5]  \tikzset{every picture/.style={yscale=0.9}}
\node (1) at (0,0)
{
$\begin{tz}
  \draw(0,0) node[below] {$A$} to (0,1) node[morphlabel] (A) {$f_1$} to node[right] {$B$} (0,2.5) node[morphlabel] (B) {$f_2$} to (0,3.5) node[above] {$C$};
  \selectpart[blue, inner sep=2pt]{(B)};
\end{tz}$
};  

\node (2) at (1,0)
{
$\begin{tz}
  \draw(0,0) node[below] {$A$} to (0,1) node[morphlabel] (A) {$g_1$} to node[right] {$B$} (0,2.5) node[morphlabel] (B) {$g_2$} to (0,3.5) node[above] {$C$};
  \selectpart[green, inner sep=2pt]{(A)};
\end{tz}$
};  

\node (3) at (2,0)
{
$\begin{tz}
  \draw(0,0) node[below] {$A$} to (0,1) node[morphlabel] (A) {$g_1$} to node[right] {$B$} (0,2.5) node[morphlabel] (B) {$g_2$} to (0,3.5) node[above] {$C$};
\end{tz}$
};  

\begin{scope}[double arrow scope]
	\draw (1) -- node[above, blue] {$\beta$} (2);
	\draw (2) -- node[above, green] {$\alpha$} (3);
\end{scope}
\end{tz}
\end{eqnarray*}

\section{Semistrict monoidal 2-categories} \label{semistrict_sec}
In this section we recall the notion of a semistrict monoidal 2-category~\cite{BN96, ds97-mbh, kv94-2categories, kv94-bm2} in the formulation of Crans~\cite{cr98-gcb}. 

The category $\text{2Cat}$ of strict 2-categories and strict 2-functors can be equipped with the {\em Gray tensor product} $\otimes_G$ making it into a monoidal category~\cite{GPS95} (see~\cite{gurskithesis} for an exposition). The most important feature of the Gray tensor product $\bicat{C} \otimes_G \bicat{D}$ of two strict 2-categories is that the objects of $\bicat{C} \otimes_G \bicat{D}$ are the same as the objects of $\bicat{C} \times \bicat{D}$, and that for every pair of 1-morphisms $f \colon A \rightarrow A'$ in $\bicat{C}$ and $g \colon B \rightarrow B'$ in $\bicat{D}$ there is a 2-isomorphism\footnote{Note that our convention runs counter to that of Baez and Neuchl~\cite{BN96}, but when thought of as a cubical functor fits the standard definition of pseudofunctor~\cite{L98} correctly.}
\[
	\begin{tz}[scale=3]
		\node (1)  at (0,0) {$(A,B)$};
		\node (3) at (0,-1) {$(A,B')$};
		\node (2) at (1,0) {$(A',B)$};
		\node (4) at (1,-1) {$(A',B')$};
		\draw[->] (1) to node[above] {$f \otimes_G \id_B$} (2);
		\draw[->] (1) to node[left] {$\id_A \otimes_G g$} (3);
		\draw[->] (3) to node[below] {$f \otimes_G \id_{B'}$} (4);
		\draw[->] (2) to node[right] {$\id_{A'} \otimes_G g$} (4);
\begin{scope}[double arrow scope]
		\draw (0.2, -0.8) to node[below right] {$\gamma_{f,g}$} (0.8, -0.2);
\end{scope}
	\end{tz}
\]
in $\bicat{C} \otimes_G \bicat{D}$. A {\em semistrict monoidal 2-category} is then usually defined as a monoid in the monoidal category $(\text{2Cat}, \otimes_G)$. 

In this way, Gray categories are used as a technical construct to avoid leaving the world of strict 2-categories and strict 2-functors. However, the explicit algebraic definition of the Gray tensor product $\bicat{C} \otimes_G \bicat{D}$ is rather awkward, given by a long list of generators and relations~\cite[Section 5.1]{gurskithesis}. In practice, the notion of a {\em cubical functor} is used instead. 

\begin{defn} Suppose $\bicat{C}$, $\bicat{C'}$ and $\bicat{D}$ are strict 2-categories. A {\em cubical functor} $F \colon \bicat{C} \times \bicat{C'} \rightarrow \bicat{D}$ is a pseudofunctor whose coherence isomorphisms
\begin{equation} \label{pseudo}
 \Phi_{(f',g'),(f,g)} \colon F(f',g') \circ F(f,g) \Rightarrow F(f'f, g'g)
\end{equation}
are the identity 2-morphism if $f' = \id$ or if $g=\id$.
\end{defn}

Note that we have not listed unit coherence 2-isomorphisms $u_{(A,B)} \colon \id_{F(A,B)} \Rightarrow F(\id_A, \id_B)$ as part of the data of a cubical functor, since it follows from (a) the fact that all the 2-categories are strict, (b) the cubical condition, and (c) the unit equation on $u_{(A,B)}$ in a pseudofunctor, that each $u_{(A,B)}$ must be the identity. 

\begin{proposition}\textnormal{(~\cite{Gr76},~\cite{Gr74},~\cite[Thm 5.2.5]{gurskithesis})} There is a canonical isomorphism  
\[
\mathrm{Cub} (\bicat{C} \times \bicat{C'}, \, \bicat{D}) \cong \mathrm{2Cat}(\bicat{C} \otimes_G \bicat{C'}, \, \bicat{D}) 
\]
between the set of cubical functors from $\bicat{C} \times \bicat{C'}$ to $\bicat{D}$ and the set of strict 2-functors from $\bicat{C} \otimes_G \bicat{C'}$ to $\bicat{D}$.
\end{proposition}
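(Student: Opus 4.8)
The plan is to deduce this from the explicit presentation of the Gray tensor product $\bicat{C} \otimes_G \bicat{C'}$ by generators and relations~\cite[Section 5.1]{gurskithesis}, using the fact (already classical~\cite{Gr76, Gr74}) that those relations are a transcription of the pseudofunctor axioms subject to the cubical condition. Recall that in this presentation the objects of $\bicat{C} \otimes_G \bicat{C'}$ are the pairs $(A,A')$; every $1$-morphism is a composite of \emph{basic} $1$-morphisms $f \otimes_G \id_{A'}$ and $\id_A \otimes_G f'$; and every $2$-morphism is built from \emph{basic} $2$-morphisms $\alpha \otimes_G \id$, $\id \otimes_G \alpha'$ and the interchangers $\gamma_{f,f'}$ under composition and whiskering, subject to relations asserting: (i) strict $2$-functoriality separately in each variable; (ii) naturality of $\gamma_{f,f'}$ in $f$ and in $f'$; (iii) that $\gamma_{f,\id}$ and $\gamma_{\id,f'}$ are identities; and (iv) compatibility of $\gamma$ with composition of $1$-morphisms in each variable. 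Since both sides of the claimed isomorphism are mere \emph{sets} of functors — no transformations appear — it suffices to produce a bijection, so there is no higher-dimensional bookkeeping to do.

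First I would fix the comparison map. Let $\iota \colon \bicat{C} \times \bicat{C'} \to \bicat{C} \otimes_G \bicat{C'}$ be the identity on objects, send a $1$-morphism $(f,f')\colon (A,B)\to(A',B')$ to $(\id_{A'} \otimes_G f') \circ (f \otimes_G \id_B)$, send $2$-morphisms to the corresponding composites of basic $2$-morphisms, and take for its coherence isomorphisms $\Phi_{(f',g'),(f,g)}$ the appropriate composites of interchangers. A short check — using strict functoriality of $\otimes_G$ in each variable together with relations (iii) and (iv) — shows $\iota$ is a well-defined cubical functor whose only nontrivial coherence cells are exactly the $\gamma_{f,g}$. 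Post-composition with a strict $2$-functor preserves cubicality, so $G \mapsto G \circ \iota$ defines a map $\mathrm{2Cat}(\bicat{C} \otimes_G \bicat{C'}, \, \bicat{D}) \to \mathrm{Cub}(\bicat{C} \times \bicat{C'}, \, \bicat{D})$, which I claim is the asserted bijection; equivalently, I am claiming that $(\bicat{C} \otimes_G \bicat{C'}, \iota)$ is the universal recipient of a cubical functor out of $\bicat{C} \times \bicat{C'}$.

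For the inverse, given a cubical functor $F$ I would define a strict $2$-functor $\tilde F \colon \bicat{C} \otimes_G \bicat{C'} \to \bicat{D}$ on the generating data: $\tilde F(A,A') = F(A,A')$; $\tilde F(f \otimes_G \id) = F(f,\id)$ and $\tilde F(\id \otimes_G f') = F(\id,f')$; $\tilde F$ of a basic $2$-morphism is $F$ of the corresponding $2$-morphism of $\bicat{C}$ or $\bicat{C'}$; and $\tilde F(\gamma_{f,f'})$ is the isomorphism $F(f,\id) \circ F(\id,f') \Rightarrow F(\id,f') \circ F(f,\id)$ built from the coherence data $\Phi$ of $F$, where one uses that $\Phi_{(\id,f'),(f,\id)}$ is an identity by the cubical condition, so that the target is literally $F(\id,f') \circ F(f,\id)$. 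One then verifies that $\tilde F$ respects every defining relation of $\bicat{C} \otimes_G \bicat{C'}$: (i) holds because a cubical $F$ is strictly functorial in each separate variable; (ii) is naturality of the pseudofunctor coherence $\Phi$; (iii) is the cubical condition on $F$; and (iv) is precisely the composition-coherence (``associativity'') axiom for $\Phi$ in a pseudofunctor, once the cubical triviality of the relevant $\Phi$-cells is taken into account. Thus $\tilde F$ is a well-defined strict $2$-functor, and $F \mapsto \tilde F$ is inverse to restriction along $\iota$: the equality $\tilde F \circ \iota = F$ is read off the definitions (in particular the coherence cells of $\tilde F \circ \iota$ are the $\tilde F(\gamma_{f,f'})$, which by construction are the coherence cells of $F$), while $\widetilde{G \circ \iota} = G$ because a strict $2$-functor out of $\bicat{C} \otimes_G \bicat{C'}$ is determined by its values on objects, on the basic $1$- and $2$-morphisms, and on the interchangers $\gamma$ — and all of these are recovered from the cubical functor $G \circ \iota$ (its action on the two factors, and its coherence isomorphisms, respectively).

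The main obstacle is the well-definedness verification in the last paragraph: systematically matching every relation in the (fairly long) presentation of $\otimes_G$ against the corresponding consequence of the pseudofunctor axioms plus the cubical condition. The reason this goes through conceptually is that the cubical condition annihilates all the coherence cells of a pseudofunctor on a product $2$-category except the interchanger, and the interchanger is precisely the single extra generator that $\bicat{C} \otimes_G \bicat{C'}$ carries over $\bicat{C} \times \bicat{C'}$; the surviving relations then say exactly that this interchanger is natural and compatible with composition in each variable, which is all that remains of the pseudofunctor axioms after the cubical reduction.
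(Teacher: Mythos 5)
The paper does not actually prove this proposition: it is quoted with citations to Gray and to Gurski's thesis (Theorem 5.2.5), and the surrounding text only records the consequence that one may work with cubical functors in place of strict 2-functors out of $\otimes_G$. So there is no in-paper proof to match yours against; what you have written is a correct outline of the standard argument from the cited sources. Your setup is right in the respects that matter: the comparison functor $\iota$ sending $(f,f')$ to $(\id\otimes_G f')\circ(f\otimes_G\id)$ is the decomposition that the cubical condition forces to be strictly respected (since $\Phi_{(\id,f'),(f,\id)}=\id$), and your $\tilde F(\gamma_{f,f'})$ is exactly the surviving interchanger $\Phi_{(f,\id),(\id,f')}$ --- this is consistent with the paper's orientation convention for $\gamma_{f,g}$ and with its later formula \eqref{defn_inter}. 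Your dictionary between the defining relations of $\bicat{C}\otimes_G\bicat{C'}$ and the pseudofunctor axioms reduced by cubicality (separate strict functoriality, naturality of $\Phi$, the cubical identities, and the composition coherence) is the correct one, and the final paragraph correctly identifies why the bijection holds: the interchanger is the unique generator of $\otimes_G$ not already present in $\bicat{C}\times\bicat{C'}$, and the unique nontrivial coherence cell of a cubical functor. The one thing you have not done is the relation-by-relation verification itself, which you acknowledge; that is where essentially all the length of Gurski's proof lives, so if this were to be written out in full you would need to transcribe his list of relations and check each. As a blind reconstruction of the intended proof strategy, however, this is accurate and complete in its ideas.
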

Let us write $\text{2Cat}^{\text{ps}}$ for the category whose objects are strict 2-categories and whose morphisms are pseudofunctors. It forms a monoidal category $(\text{2Cat}^{\text{ps}}, \, \times)$ under Cartesian product of 2-categories. With the above discussion in mind, the following definition is normally used in practice (if not explicitly so then implicitly so!).

\begin{defn} \label{strict2} A {\em semistrict monoidal 2-category} is a monoid $(\bicat{M}, \, 1, \, \otimes, \, \{\Phi_{(f',g'),(f,g)}\})$ in the monoidal category $(\text{2Cat}^{\text{ps}}, \, \times)$ whose tensor product pseudofunctor 
\[
(\otimes, \Phi) \colon \bicat{M} \times \bicat{M} \rightarrow \bicat{M}
\]
is cubical. \label{strict1}
\end{defn}
An alternative, possibly more natural, way to define a semistrict monoidal 2-category is to start with the definition of a fully weak monoidal bicategory~\cite{st13-ccb, CSPthesisLatest} and then impose strictness conditions on the coherence data.
\begin{defn}  A {\em semistrict monoidal 2-category} is a monoidal bicategory $\bicat{M}$ such that:
\begin{itemize}
\item $\bicat{M}$ is a strict 2-category;
\item The transformations $\alpha$, $r$, $\pi$, $\mu$, $\lambda$ and $\rho$ are identities. Moreover the inverse adjoint equivalences $\alpha^*$, $l^*$ and $r^*$ are also identities with trivial adjunction data. 
\item The functor $\otimes = (\otimes, \Phi_{(f',g'),(f,g)}, \Phi_{A, B})$ is cubical, and $\Phi_{A,B}$ is the identity for all objects $A,B$.  
\end{itemize}
\end{defn}
\noindent If we unravel the many diagrams defining a monoidal bicategory from~\cite{st13-ccb, CSPthesisLatest}, and impose the above equations, we conclude that these two definitions are identical. 

In fact, this definition contains redundadnt information. 

\begin{defn} Let $\bicat{M}$ be a semistrict monoidal 2-category. The underlying {\em interchangor} is the collection of 2-isomorphisms
\begin{equation} \label{interchange}
 \phi_{f,g} := \Phi_{(f, \id), (\id, g)} \colon (f \otimes \id) \circ (\id \otimes g) \Rightarrow (\id \otimes g) \circ (f \otimes \id)
\end{equation}
where $f,g$ are 1-morphisms in $\bicat{M}$. 
\end{defn}
\noindent Note that the target of $\Phi_{(f, \id), (\id, g)}$ in \eqref{interchange} makes sense, since if we unravel the definitions, we obtain:
\[
\begin{tz}[xscale=6, yscale=3]
 \node (1) at (0,0) {$(f \otimes \id) \circ (\id \otimes g)$};
 \node(2) at (1,0) {$(f \circ \id) \otimes (\id \circ g)$};
 \node (3) at (0,-1) {$(\id \otimes g) \circ (f \otimes \id)$};
 \node (4) at (1,-1) {$(\id \circ f) \otimes (g \circ \id)$};

 \draw[double] (2) to (4);

\begin{scope}{double arrow scope}
  \draw[double, ->] (1) to node[above] {$\Phi_{(f, \id), (\id, g)}$} (2);
  \draw[double, ->] (4) to node[below] {$\Phi^{-1}_{(\id, g), (f, \id)} = \id$} (3);
\end{scope}

\end{tz}
\]
The following lemma is standard. We will give a graphical proof in terms of wire diagrams in part 3 of Proposition \ref{stringentprop} below.
\begin{lemma}\textnormal{(~\cite{CSPthesisLatest},~\cite[Lemma 2.15]{bms13-gdd})} The coherence 2-isomorphisms $\Phi_{(f', g'),(f,g)}$ are uniquely determined by the underlying interchangor 2-isomorphisms $\phi_{f,g}$. \label{little_lem}
\end{lemma}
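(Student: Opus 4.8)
The plan is to express an arbitrary coherence isomorphism $\Phi_{(f',g'),(f,g)}$ as a composite of interchangors $\phi$ and identities, using only the cubical condition and the pseudofunctor axioms (in particular the associativity/hexagon coherence for $\Phi$). The key observation is that any pair $(f',g'),(f,g)$ of composable morphisms in $\bicat{M} \times \bicat{M}$ factors through identities: we have $(f',g') = (f',\id)\circ(\id,g')$ and $(f,g) = (f,\id)\circ(\id,g)$ in $\bicat{M}\times\bicat{M}$, and likewise $(f'f,g'g) = (f',\id)\circ(f,\id)\circ(\id,g')\circ(\id,g)$ after inserting the interchange $(\id,g')\circ(f,\id) \simeq (f,\id)\circ(\id,g')$. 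So I would first write down the composite $F(f',\id)\circ F(f,\id)\circ F(\id,g')\circ F(\id,g)$ — which equals $(f'f\otimes g'g)$ on the nose, since all of these coherences vanish by the cubical condition — and then show that $\Phi_{(f',g'),(f,g)}$ must be obtained from it by whiskering a single copy of the interchangor $\phi_{f,g'} = \Phi_{(f,\id),(\id,g')}$ into the middle, together with four copies of $\Phi$ that are identities by cubicality.

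Concretely, the steps are: (1) Use the pseudofunctor associativity coherence for $\Phi$ repeatedly to decompose $\Phi_{(f',g'),(f,g)}$ in terms of the coherences on the four factors $(f',\id),(f,\id),(\id,g'),(\id,g)$ and the coherence associated to reordering $(\id,g')\circ(f,\id)$ past each other in $\bicat{M}\times\bicat{M}$. (2) Observe that in $\bicat{M}\times\bicat{M}$ the 1-morphisms $(f,\id)$ and $(\id,g')$ literally commute — $(\id,g')\circ(f,\id) = (f,g') = (f,\id)\circ(\id,g')$ — so there is no reordering coherence in the \emph{source} category; the only place an interchange can enter is through the pseudofunctor structure of $\otimes$ itself, and that contribution is exactly $\phi_{f,g'}$. (3) Invoke the cubical condition to kill the coherences on $(f',\id)\circ(f,\id)$ (first slot composition with identity second slot), on $(\id,g')\circ(\id,g)$ (identity first slot), and the two "mixed" ones $\Phi_{(f',\id),(\id,g)}$-type terms where one argument is an identity. (4) Conclude that $\Phi_{(f',g'),(f,g)}$ equals the whiskered interchangor $\id_{(f'\otimes \id)} \ast \phi_{f,g'} \ast \id_{(\id\otimes g)}$ (up to the evident identity coherences), which is manifestly determined by the $\phi$'s alone; uniqueness is then immediate since two semistrict structures with the same $\phi$ produce the same formula.

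I expect the main obstacle to be \textbf{bookkeeping the pseudofunctor associativity coherence correctly}: writing $(f'f,g'g)$ as a fourfold composite requires choosing a bracketing, and each application of the $\Phi$-associativity axiom (the "2-cell" coherence relating $\Phi_{xy,z}\circ(\Phi_{x,y}\ast\id)$ to $\Phi_{x,yz}\circ(\id\ast\Phi_{y,z})$) must be applied in the right order and the resulting pasting diagram checked to collapse. This is precisely the kind of computation the wire-diagram calculus is designed to make transparent, so rather than grinding through it symbolically I would defer the actual verification to the graphical proof promised in part 3 of Proposition~\ref{stringentprop}, where the four vanishing coherences become empty regions in the diagram and the single surviving $\phi_{f,g'}$ is visibly the unique non-trivial box. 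The only genuinely mathematical point, as opposed to bookkeeping, is that no \emph{other} interchangor (e.g.\ $\phi_{f',g}$) can appear — and that follows because $f'$ and $g$ never need to be commuted past each other in this particular factorization.
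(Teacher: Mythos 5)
Your overall strategy is the same as the paper's (the paper proves this lemma in part 3 of Proposition \ref{stringentprop}, by pasting together instances of the pseudofunctor coherence square and observing that cubicality kills every $\Phi$ except one), but your execution contains a concrete error that leads to the wrong formula. The cubical condition kills $\Phi_{(a,b),(c,d)}$ only when $a=\id$ or $d=\id$. Hence in your fourfold composite $F(f',\id)\circ F(f,\id)\circ F(\id,g')\circ F(\id,g)$, the two coherences within each half do vanish, but the coherence joining the halves, $\Phi_{(f'f,\,\id),(\id,\,g'g)}=\phi_{f'f,g'g}$, is precisely a (generally nontrivial) interchangor: your composite equals $(f'f\otimes\id)\circ(\id\otimes g'g)$, which is the \emph{source} of an interchangor, not $f'f\otimes g'g$ on the nose. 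The decomposition that does hold on the nose is the opposite one, $f\otimes g=(\id\otimes g)\circ(f\otimes\id)$ (axiom (v), nudging), since $\Phi_{(\id,g),(f,\id)}=\id$ by cubicality. Your ordering is the one that would be correct under the \emph{opcubical} convention, which the paper explicitly does not use.

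This error propagates to your identification of the surviving interchangor. Writing $(f'\otimes g')\circ(f\otimes g)=(\id\otimes g')\circ(f'\otimes\id)\circ(\id\otimes g)\circ(f\otimes\id)$, the composition order is $f$, then $g$, then $f'$, then $g'$, while the target $f'f\otimes g'g=(\id\otimes g'g)\circ(f'f\otimes\id)$ has order $f$, $f'$, $g$, $g'$. So the pair that must be commuted past each other is $f'$ and $g$ --- exactly the pair you assert never needs to be commuted --- and the correct conclusion is $\Phi_{(f',g'),(f,g)}=\phi_{f',g}$ (suitably whiskered), as in equation \eqref{defn_inter}, not $\phi_{f,g'}$. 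Note that $\phi_{f',g}$ and $\phi_{f,g'}$ have different sources and targets in general, so this is not a harmless relabelling. Separately, deferring the verification to ``the graphical proof promised in part 3 of Proposition \ref{stringentprop}'' is circular here, since that is where the paper proves this very lemma; the content you would need to supply is precisely the pasting of coherence squares that the paper draws there.
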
 

\section{Stringent monoidal 2-categories} \label{stringent_sec}
In this section we introduce stringent monoidal 2-categories, extend the wire diagram notation to them, and prove that they are equivalent to semistrict monoidal 2-categories. 

\subsection{The definition}

In the light of Lemma \ref{little_lem}, it is convenient to formulate the notion of a semistrict monoidal 2-category purely in terms of the interchangor 2-isomorphisms. This has been the approach in~\cite{BN96,bms13-gdd}. We make the following definition, apologizing to the reader for the burden of excessive terminology, in the hope that it is compensated for by the boon of greater precision.
\begin{defn} A {\em stringent} monoidal 2-category $\bicat{M} =(\bicat{M}, 1, \otimes, \{\phi_{f,g}\})$ consists of:
\begin{itemize}
 \item a strict 2-category $\bicat{M}$, 
 \item an object $1 \in \bicat{M}$, 
 \item strict left- and right-tensor functors $A \otimes -$ and $- \otimes A$ from $\bicat{M}$ to itself, 
 \item interchangor 2-isomorphisms $\phi_{f,g} \colon  (f \otimes \id) \circ (\id \otimes g) \Rightarrow (\id \otimes g) \circ (f \otimes \id)$ 
\end{itemize}
satisfying the relations spelt out below.
\end{defn}
\noindent Instead of using {\em pasting diagrams} to describe these relations, as in~\cite[Lemma 4]{BN96}, I will extend the {\em wire diagrams} notation from Section \ref{wire_diag}. This extended notation will be introduced as we go along. Let us begin. 

So, to start with, a stringent monoidal 2-category consists of a strict 2-category $\bicat{M}$ together with:
\begin{itemize}
 \item[(i)] An object $1 \in \bicat{M}$, drawn as the invisible wire:
 \[
   \begin{tz}
    \draw[dotted] (0,0) rectangle (1,1);
   \end{tz}
 \]
 \item[(ii)] For any two objects $A, B \in \bicat{M}$, an object $A \otimes B \in \bicat{M}$, drawn as:
  \[
  \begin{tz} 
  \draw(0,0) node[below] {$A$} to (0,1) node[above] {$A$}; 
  \draw(1,0) node[below] {$B$} to (1,1) node[above] {$B$}; 
  \end{tz} 
 \]
 \item[(iii)] The tensor product of objects is strictly associative and unital. Moreover, for each object $A \in \bicat{M}$, it extends to a strict 2-functor $L_A := A \otimes -$ and $R_A := - \otimes A$. These 2-functors satisfy $L_A  L_B = L_{A \otimes B}$, $R_B R_A = R_{A \otimes B}$ and $L_{A} R_B = R_B L_A$ for all $A, B \in \bicat{M}$.
\end{itemize}

Let us pause here to explain the wire diagram notation more precisely. Each wire diagram representing a 1-morphism is to be evaluated into a 1-morphism in $\bicat{M}$ according to the prescription {\em tensor first, then compose}. For instance, the diagram
\begin{equation} \label{d1}
\begin{tz}
  \draw(0,0) node[below] {$A$} to (0,1) node[morphlabel] (f) {$f$} to node[right] {$A'$} (0,2.5) node[morphlabel] (f') {$f'$} to (0, 5) node[above] {$A''$};
  \draw (1,0) node[below] {$B$} to (1,1) node[morphlabel] (g) {$g$} to node[right] {$B'$} (1,4) node[morphlabel] (g') {$g'$} to (1,5) node[above] {$B''$};
\end{tz}
\end{equation}
is to be evaluated as follows. First, draw horizontal lines to separate the diagram into its indecomposable pieces. The regions between the horizontal lines evaluate to tensor products of objects, and the horizontal lines evaluate to tensor products of 1-morphisms. Then, compose the 1-morphisms together:
\[
\begin{tz}[yscale=1.1]
  \draw(0,0) node[below] {$A$} to (0,1) node[morphlabel] (f) {$f$} to node[right] {$A'$} (0,2.5) node[morphlabel] (f') {$f'$} to (0, 5) node[above] {$A''$};
  \draw (1,0) node[below] {$B$} to (1,1) node[morphlabel] (g) {$g$} to node[right] {$B'$} (1,4) node[morphlabel] (g') {$g'$} to (1,5) node[above] {$B''$};
  \draw[dotted] (0,1) to (2,1);
  \draw[dotted] (0,2.5) to (2,2.5);
  \draw[dotted] (0,4) to (2, 4);
\node[below] (1) at (3.5,0) {$A \otimes B$};
\node (2) at (3.5,1.75) {$A' \otimes B'$};
\node (3) at (3.5,3.25) {$A'' \otimes B'$};
\node[above] (4) at (3.5,5) {$A'' \otimes B''$};

\draw[->] (1) to (2);
\draw[->] (2) to (3);
\draw[->] (3) to (4);

\node at (2.8, 1) {$f \otimes g$};
\node at (2.8, 2.5) {$f' \otimes \id_{B'}$};
\node at (2.8, 4) {$\id_{A'} \otimes g'$};
  
\end{tz}
\]
So, the wire diagram \eqref{d1} evaluates to the 1-morphism
\[
(\id_{A''} \otimes g') \circ (f' \otimes \id_{B'}) \circ (f \otimes g).
\]
in $\bicat{M}$. 

With this prescription, we can interpret (iii) as follows. Functoriality at the level of 1-morphisms means that equations between composites of 1-morphisms are {\em local}, that is they remain true after arbitrarily tensoring on the left and right and pre- and post-composition:
\[
\begin{tz}[xscale=0.5]
	\draw (0,0) to (0,4);
	\draw (2,0) to (2,4);
	\draw[fill=white] (-0.2, 1.03) rectangle (2.2, 1.63);
	\draw[fill=white] (-0.2, 2.36) rectangle (2.2, 2.96);
	\node at (1, 1.33) {$f$};
	\node at (1, 2.66) {$g$};
	\node at (1, 0.2) {$\cdots$};
	\node at (1,3.8) {$\cdots$};
	\node at (1, 2) {$\cdots$};
\end{tz}
\,\, = \, \,
\begin{tz}[xscale=0.5]
	\draw (0,0) to (0,4);
	\draw (2,0) to (2,4);
	\draw[fill=white] (-0.2, 1.6) rectangle (2.2, 2.4);
	\node at (1, 2) {$h$};
	\node at (1, 0.2) {$\cdots$};
	\node at (1,3.8) {$\cdots$};
\end{tz}
 \quad \Rightarrow \quad
 \begin{tz}[xscale=0.5]
	\draw (0,0) to (0,4);
	\draw (2,0) to (2,4);
	\draw[fill=white] (-0.2, 1.03) rectangle (2.2, 1.63);
	\draw[fill=white] (-0.2, 2.36) rectangle (2.2, 2.96);
	\node at (1, 1.33) {$f$};
	\node at (1, 2.66) {$g$};
	\node at (1, 0.2) {$\cdots$};
	\node at (1,3.8) {$\cdots$};
	\node at (1, 2) {$\cdots$};
	\draw[draw=green] (-0.5, 0) rectangle (2.5, 4);
	\node at (1, 4.5) {$\vdots$};
	\node at (3, 2) {$\cdots$};
	\node at (-1, 2) {$\cdots$};
	\node at (1, -0.5) {$\vdots$};
\end{tz}
\,\, = \,\,
 \begin{tz}[xscale=0.5]
	\draw (0,0) to (0,4);
	\draw (2,0) to (2,4);
	\draw[fill=white] (-0.2, 1.6) rectangle (2.2, 2.4);
	\node at (1, 2) {$h$};
	\node at (1, 0.2) {$\cdots$};
	\node at (1,3.8) {$\cdots$};
	\draw[draw=green] (-0.5, 0) rectangle (2.5, 4);
	\node at (1, 4.5) {$\vdots$};
	\node at (3, 2) {$\cdots$};
	\node at (-1, 2) {$\cdots$};
	\node at (1, -0.5) {$\vdots$};
\end{tz}
\]
Functoriality at the level of 2-morphisms is similar: if an equation between composites of 2-morphisms holds, then it continues to hold after arbitrarily tensoring the left and right hand sides and pre- and post-composing with 1-morphisms.
 
If a 2-morphism $\alpha \colon f \Rightarrow g$ is surrounded by tensor products and composites of 1-morphisms, then we use a box to indicate where $\alpha$ is acting. So for instance, 
\[
\begin{tz}[xscale=3.3]

\node (1) at (0,0)
{
$\begin{tz}[xscale=0.8]
	\draw(0,0) node[below] {$A$} to node[morphlabel] {$h_3$} (0,5) node[above] {$A'$};
	\draw(1,0) node[below] {$B$} to (1,1) node[morphlabel] {$h_4$} to node[right] {$B'$} (1, 2.5) node[morphlabel] (f) {$f$} to node[right] {$B''$} (1, 4) node[morphlabel]{$h_2$} to (1,5) node[above] {$B'''$};
	\draw(2,0) node[below] {$C$} to node[morphlabel] {$h_1$} (2,5) node[above] {$C'$};
	\selectpart[green, inner sep=1pt] {(f)};
\end{tz}$
};

\node (2) at (1,0)
{
$\begin{tz}[xscale=0.8]
	\draw(0,0) node[below] {$A$} to node[morphlabel] {$h_3$} (0,5) node[above] {$A'$};
	\draw(1,0) node[below] {$B$} to (1,1) node[morphlabel] {$h_4$} to node[right] {$B'$} (1, 2.5) node[morphlabel] (f) {$g$} to node[right] {$B''$} (1, 4) node[morphlabel]{$h_2$} to (1,5) node[above] {$B'''$};
	\draw(2,0) node[below] {$C$} to node[morphlabel] {$h_1$} (2,5) node[above] {$C'$};
\end{tz}$
};

\begin{scope}[double arrow scope]
	\draw (1) -- node[above] {$\alpha$} (2);
\end{scope}
\end{tz}
\]
evaluates in gory detail as
\[
\id_{\id_{A'} \otimes h_2 \otimes \id_{C'}} \ast (\id_{h_3} \otimes \alpha \otimes \id_{h_1}) \ast \id_{\id_{A} \otimes h_4 \otimes \id_{C'}}.
\]
At this point the utility of the wire diagrams notation starts to become clear!  

\begin{itemize} 
\item[(iv)] For every pair of 1-morphisms $f \colon A \rightarrow A'$ and $g \colon B \rightarrow B'$, an {\em interchangor} 2-isomorphism 
\[
\phi_{f,g} \colon (f \otimes \id_{B'}) \circ (\id_{A} \otimes g) \Rightarrow (\id_{A'} \otimes g) \circ (f \otimes \id_{B})
\]
drawn as:
\begin{equation} \label{interchangor}
\begin{tz}[xscale=1.3] 
\node (1) at (0,0)
{
$\begin{tz}
  \draw(0,0) node[below] {$A$} to (0,2) node[morphlabel] (f) {$f$} to (0,3) node[above] {$A'$};
  \draw(1,0) node[below] {$B$} to (1,1) node[morphlabel] (g) {$g$} to (1,3) node[above] {$B'$};
\end{tz}$
};  

\node (2) at (2,0)
{
$\begin{tz}
  \draw(0,0) node[below] {$A$} to (0,1) node[morphlabel] (f) {$f$} to (0,3) node[above] {$A'$};
  \draw(1,0) node[below] {$B$} to (1,2) node[morphlabel] (g) {$g$} to (1,3) node[above] {$B'$};
\end{tz}$
};  

\begin{scope}[double arrow scope]
	\draw (1) -- node[above] {$\phi_{f,g}$} (2);
\end{scope}

\end{tz}
\end{equation}
\end{itemize}
We pause here to unpack a crucial identity. If $\phi$ is the underlying interchangor of the coherence isomorphisms $\Phi$ in a semistrict monoidal 2-category, then the cubical equation on $\Phi$ implies the following in wire diagrams:
\[
\begin{tz}
  \draw(0,0) node[below] {$A$} to (0,1) node[morphlabel] {$f$} to (0,3.5) node[above] {$A'$};
  \draw(1,0) node[below] {$B$} to (1,2.5) node[morphlabel] {$g$} to (1,3.5) node[above] {$B'$};
\end{tz}
\quad = \quad
\begin{tz}
  \draw(0,0) node[below] {$A$} to (0,1) node[morphlabel] {$f$} to (0,2.5) node[morphlabel] {$\id$} to (0,3.5) node[above] {$A'$};
  \draw(1,0) node[below] {$B$} to (1,1) node[morphlabel] {$\id$} to (1,2.5) node[morphlabel] {$g$} to (1, 3.5) node[above] {$B'$};
\end{tz}
\quad = \quad
\begin{tz}
  \draw(0,0) node[below] {$A$} to (0,1.75) node[morphlabel] {$\id \circ f$} to (0,3.5) node[above] {$A'$};
  \draw(1,0) node[below] {$B$} to (1,1.75) node[morphlabel] {$g \circ \id$} to (1,3.5) node[above] {$B'$};
\end{tz}
\quad = \quad
\begin{tz}
  \draw(0,0) node[below] {$A$} to (0,1.75) node[morphlabel] {$f$} to (0,3.5) node[above] {$A'$};
  \draw(1,0) node[below] {$B$} to (1,1.75) node[morphlabel] {$g$} to (1,3.5) node[above] {$B'$};
\end{tz}
\]
The first equation follows from left-tensoring and right-tensoring being strict 2-functors. The third equation follows since $\bicat{M}$ is a strict 2-category. The second equation follows from $\Phi_{(\id, g), (f, \id)} = \id$. To emphasize: although the interchangor \eqref{interchangor} is nontrivial, we at least have the following identity, which we take as an {\em axiom} in a stringent monoidal 2-category.
\begin{itemize}
\item [(v)] For all 1-morphisms $f\colon A \rightarrow A'$ and $g \colon B \rightarrow B'$, we have:
\begin{center}
\framebox[1.1\width]{$
\begin{tz}
  \draw(0,0) node[below] {$A$} to (0,1) node[morphlabel] {$f$} to (0,3) node[above] {$A'$};
  \draw(1,0) node[below] {$B$} to (1,2) node[morphlabel] {$g$} to (1,3) node[above] {$B'$};
\end{tz}
\quad = \quad
\begin{tz}
  \draw(0,0) node[below] {$A$} to (0,1.5) node[morphlabel] {$f$} to (0,3) node[above] {$A'$};
  \draw(1,0) node[below] {$B$} to (1,1.5) node[morphlabel] {$g$} to (1,3) node[above] {$B'$};
\end{tz} \, .$}
\end{center}
\end{itemize}
This is called {\em nudging} in~\cite{GPS95}. Note that if we had adopted the `opcubical' convention on cubical functors as in~\cite{bms13-gdd}, nudging would have worked in the opposite direction. 

\begin{itemize}
\item[(vi)] For all 1-morphisms $f \colon A \rightarrow A'$, $g \colon B \rightarrow B'$, $h \colon C \rightarrow C'$ we have
\[
\phi_{\id_A \otimes g, h} = \id_A \otimes \phi_{g,h}, \quad \phi_{f \otimes \id_B, h} = \phi_{f, \id_B \otimes h}, \, \text{and } \phi_{f, g \otimes \id_C} = \phi_{f,g} \otimes \id_C.
\]
\end{itemize}
The first equation says that
\[
\begin{tz}[xscale=3.3] 
\node (1) at (0,0)
{
$\begin{tz}[xscale=0.6]
  \draw (-1, 0) node[below] {$A$} to (-1, 3) node[above] {$A$};
  \draw(0,0) node[below] {$B$} to (0,2) node[morphlabel] {$g$} to (0,3) node[above] {$B'$};
  \draw(1,0) node[below] {$C$} to (1,1) node[morphlabel] {$h$} to (1,3) node[above] {$C'$};
  \draw[draw=green] (-1.4, 2.4) rectangle (1.4, 0.6);
\end{tz}$
};  

\node (2) at (1,0)
{
$\begin{tz}[xscale=0.6]
  \draw (-1, 0) node[below] {$A$} to (-1, 3) node[above] {$A$};
  \draw(0,0) node[below] {$B$} to (0,1) node[morphlabel] {$g$} to (0,3) node[above] {$B'$};
  \draw(1,0) node[below] {$C$} to (1,2) node[morphlabel] {$h$} to (1,3) node[above] {$C'$};
\end{tz}$
};  

\begin{scope}[double arrow scope]
	\draw (1) -- node[above] {$\phi_{\id_A \otimes g, h}$} (2);
\end{scope}
\end{tz} 
\quad = \quad
\begin{tz}[xscale=3]
\node (1) at (0,0)
{
$\begin{tz}[xscale=0.6]
  \draw (-1, 0) node[below] {$A$} to (-1, 3) node[above] {$A$};
  \draw(0,0) node[below] {$B$} to (0,2) node[morphlabel] {$g$} to (0,3) node[above] {$B'$};
  \draw(1,0) node[below] {$C$} to (1,1) node[morphlabel] {$h$} to (1,3) node[above] {$C'$};
  \draw[draw=green] (-0.4, 2.4) rectangle (1.4, 0.6);
\end{tz}$
};  

\node (2) at (1,0)
{
$\begin{tz}[xscale=0.6]
  \draw (-1, 0) node[below] {$A$} to (-1, 3) node[above] {$A$};
  \draw(0,0) node[below] {$B$} to (0,1) node[morphlabel] {$g$} to (0,3) node[above] {$B'$};
  \draw(1,0) node[below] {$C$} to (1,2) node[morphlabel] {$h$} to (1,3) node[above] {$C'$};
\end{tz}$
};

\begin{scope}[double arrow scope]
	\draw (1) -- node[above] {$\phi_{g, h}$} (2);
\end{scope}
\end{tz} 
\]
and similarly for the other two equations.
\begin{itemize}
 \item [(vii)] For all 1-morphisms $f \colon A \rightarrow A'$ and $g \colon B \rightarrow B'$ we have $\phi_{f, \id} = \id$ and $\phi_{\id, g} = \id$. In diagrams:
\[
\begin{tz}[xscale=1.3] 
\node (1) at (0,0)
{
$\begin{tz}
  \draw(0,0) node[below] {$A$} to (0,2) node[morphlabel] (f) {$f$} to (0,3) node[above] {$A'$};
  \draw(1,0) node[below] {$B$} to (1,1) node[morphlabel] (g) {$\id$} to (1,3) node[above] {$B'$};
\end{tz}$
};  

\node (2) at (2,0)
{
$\begin{tz}
  \draw(0,0) node[below] {$A$} to (0,1) node[morphlabel] (f) {$f$} to (0,3) node[above] {$A'$};
  \draw(1,0) node[below] {$B$} to (1,2) node[morphlabel] (g) {$\id$} to (1,3) node[above] {$B'$};
\end{tz}$
};  

\begin{scope}[double arrow scope]
	\draw (1) -- node[above] {$\phi_{f,\id_B}$} (2);
\end{scope}

\end{tz}
\, \, =  \,\,
\begin{tz}[xscale=1.3] 
\node (1) at (0,0)
{
$\begin{tz}
  \draw(0,0) node[below] {$A$} to (0,2) node[morphlabel] (f) {$f$} to (0,3) node[above] {$A'$};
  \draw(1,0) node[below] {$B$} to (1,1) node[morphlabel] (g) {$\id$} to (1,3) node[above] {$B'$};
\end{tz}$
};  

\node (2) at (2,0)
{
$\begin{tz}
  \draw(0,0) node[below] {$A$} to (0,1) node[morphlabel] (f) {$f$} to (0,3) node[above] {$A'$};
  \draw(1,0) node[below] {$B$} to (1,2) node[morphlabel] (g) {$\id$} to (1,3) node[above] {$B'$};
\end{tz}$
};  

\begin{scope}[double arrow scope]
	\draw (1) -- node[above] {$\id$} (2);
\end{scope}

\end{tz}
\]
plus the other version of this equation (where $f$ occurs on the right).
 \item [(viii)] For all 1-morphisms $f \colon A \rightarrow A'$, $g \colon B \rightarrow B'$ and 2-morphisms $\alpha \colon f \Rightarrow f'$, the following equation holds:
\[
\begin{tz}[xscale=1.3, yscale=6] 
\node (1) at (0,0)
{
$\begin{tz}
  \draw(0,0) node[below] {$A$} to (0,2) node[morphlabel] (f) {$f$} to (0,3) node[above] {$A'$};
  \draw(1,0) node[below] {$B$} to (1,1) node[morphlabel] (g) {$g$} to (1,3) node[above] {$B'$};
  \selectpart[green, inner sep=1pt] {(f)};
\end{tz}$
};  

\node (2) at (2,0)
{
$\begin{tz}
  \draw(0,0) node[below] {$A$} to (0,2) node[morphlabel] (f) {$f'$} to (0,3) node[above] {$A'$};
  \draw(1,0) node[below] {$B$} to (1,1) node[morphlabel] (g) {$g$} to (1,3) node[above] {$B'$};
\end{tz}$
};  

\node (3) at (0,-1)
{
$\begin{tz}
  \draw(0,0) node[below] {$A$} to (0,1) node[morphlabel] (f) {$f$} to (0,3) node[above] {$A'$};
  \draw(1,0) node[below] {$B$} to (1,2) node[morphlabel] (g) {$g$} to (1,3) node[above] {$B'$};
  \selectpart[inner sep=1pt, green] {(f)}
\end{tz}$
};

\node (4) at (2,-1)
{
$\begin{tz}
  \draw(0,0) node[below] {$A$} to (0,1) node[morphlabel] (f) {$f'$} to (0,3) node[above] {$A'$};
  \draw(1,0) node[below] {$B$} to (1,2) node[morphlabel] (g) {$g$} to (1,3) node[above] {$B'$};
\end{tz}$
};

\begin{scope}[double arrow scope]
	\draw (1) -- node[above, green] {$\alpha$} (2);
	\draw (2) -- node[right] {$\phi_{f, g'}$} (4);
	\draw (1) -- node[left] {$\phi_{f,g}$} (3);
	\draw (3) -- node[below, green] {$\alpha$} (4);
\end{scope}
\end{tz}
\] 
Similarly for a 2-morphism $\beta \colon g \Rightarrow g'$.
\item[(ix)] For all 1-morphisms $f \colon A \rightarrow A'$, $g \colon B \rightarrow B'$, $h \colon B' \rightarrow B''$, the following diagram commutes:
\[
\begin{tz}[xscale=1.3, yscale=8] 
\node (1) at (0,0)
{
$\begin{tz}[yscale=0.7]
  \draw(0,0) node[below] {$A$} to (0,4) node[morphlabel] (f) {$f$} to (0,5) node[above] {$A'$};
  \draw(1,0) node[below] {$B$} to (1,1) node[morphlabel] (g) {$g$} to (1,2.5) node[morphlabel] (g') {$g'$} to (1, 5) node[above] {$B''$};
	\draw[green] ([shift={(135:0.1)}]f.135) rectangle ([shift={(-45:0.1)}] g'.-45);
	\draw[blue] ([shift={(135:0.3)}]f.135) rectangle ([shift={(-45:0.3)}] g.-45);
\end{tz}$
};  

\node (2) at (2,0)
{
$\begin{tz}[yscale=0.7]
  \draw(0,0) node[below] {$A$} to (0,2.5) node[morphlabel] (f) {$f$} to (0,5) node[above] {$A'$};
  \draw(1,0) node[below] {$B$} to (1,1) node[morphlabel] (g) {$g$} to (1,4) node[morphlabel] (g') {$g'$} to (1, 5) node[above] {$B''$};
	\draw[green] ([shift={(135:0.1)}]f.135) rectangle ([shift={(-45:0.1)}] g.-45);
\end{tz}$    
};  

\node (3) at (2,-1)
{
$\begin{tz}[yscale=0.7]
  \draw(0,0) node[below] {$A$} to (0,1) node[morphlabel] (f) {$f$} to (0,5) node[above] {$A'$};
  \draw(1,0) node[below] {$B$} to (1,2.5) node[morphlabel] (g) {$g$} to (1,4) node[morphlabel] (g') {$g'$} to (1, 5) node[above] {$B''$};
\end{tz}$
};

\begin{scope}[double arrow scope]
	\draw (1) -- node[above, green] {$\phi_{f,g'}$} (2);
	\draw (2) -- node[right] {$\phi_{f, g}$} (3);
	\draw (1) -- node[below left, blue] {$\phi_{f,g'g}$} (3);
\end{scope}
\end{tz}
\]  
Note that we have used colours to differentiate the source of the 2-morphisms. There is also a corresponding rotated version of this diagram.
\end{itemize}

\subsection{Example} A stringent monoidal 2-category with one object $\bicat{M}$ is the same thing as a symmetric monoidal category $M$, after reindexing the 1-morphisms and 2-morphisms in $\bicat{M}$ as objects and morphisms in $M$ respectively. Wire diagrams make this very clear. Let $A$ and $B$ be 1-endomorphisms of the unit object $1 \in \bicat{M}$. The tensor product $\odot$ in $M$ is defined as $A \odot B := A \otimes B$ in $\bicat{M}$. The braiding 
\[
\sigma_{A,B} \colon A \odot B \rightarrow B \odot A
\]
in $M$ is defined by using the interchangor $\phi_{B,A}$ in $\bicat{M}$, as follows:
\[
\begin{tz}[yscale=0.6, xscale=0.8]
  \draw [dotted] (0,0) to (0,1.5) node[morphlabel] {$A$} to (0,3);
  \draw [dotted] (1,0) to (1,1.5) node[morphlabel] {$B$} to (1,3);
\end{tz}
\quad = \quad
\begin{tz}[yscale=0.6, xscale=0.8]
  \draw [dotted] (0,0) to (0,1) node[morphlabel] {$A$} to (0,3);
  \draw [dotted] (1,0) to (1,2) node[morphlabel] {$B$} to (1,3);
\end{tz}
\quad = \quad
\begin{tz}[yscale=0.6, xscale=0.8]
  \draw [dotted] (0,0) to (0,2) node[morphlabel] {$B$} to (0,3);
  \draw [dotted] (1,0) to (1,1) node[morphlabel] {$A$} to (1,3);
\end{tz}
 \,\, \xdoubleto{\phi_{B,A}} \,\,
\begin{tz}[yscale=0.6, xscale=0.8]
  \draw [dotted] (0,0) to (0,1) node[morphlabel] {$B$} to (0,3);
  \draw [dotted] (1,0) to (1,2) node[morphlabel] {$A$} to (1,3);
\end{tz}
\quad = \quad
\begin{tz}[yscale=0.6, xscale=0.8]
  \draw [dotted] (0,0) to (0,1.5) node[morphlabel] {$B$} to (0,3);
  \draw [dotted] (1,0) to (1,1.5) node[morphlabel] {$A$} to (1,3);
\end{tz}
\]
In text form, this is expressed as follows:
\begin{eqnarray*}
  A \otimes B &=& (\id_1 \otimes B) \circ (A \otimes \id_1) \\
  &=& (B \otimes \id_1) \circ (\id_1 \otimes A) \\
  &\xdoubleto{\phi_{B,A}}& (\id_1 \otimes A) \circ (B \otimes \id_1) \\
  &=& B \otimes A
\end{eqnarray*}
The first equation is nudging (Axiom (v)), the second equation is the fact that $1$ is a strict unit (Axiom (iii)), the third is the interchangor, and the fourth is nudging again. It is then a pleasant exercise in the graphical calculus that $\sigma_{A,B}$ is natural and bilinear, so that $(M, \id_1, \odot, \{\sigma_{A,B}\})$ is a symmetric monoidal category. The reverse procedure works in the same way.

\subsection{Equivalence with semistrict monoidal 2-categories}
We can now prove the following.
\begin{proposition} \label{stringentprop} \begin{itemize}
 \item [1.] If $(\bicat{M}, 1, \otimes, \{\Phi_{(f',g'),(f,g)}\})$ is a semistrict monoidal 2-category, then restricting to the underlying interchangor 2-isomorphisms $\phi_{f,g}$ gives a stringent monoidal 2-category. 
 \item [2.] If $(\bicat{M}, 1, \otimes, \{\phi_{f,g}\})$ is a stringent monoidal 2-category, then the interchangor 2-isomorphisms $\phi_{(f,g)}$ can be extended to coherence isomorphisms $\Phi_{(f',g'),(f,g)}$ making $\bicat{M}$ into a semistrict monoidal 2-category.
 \item [3.] The processes in (1) and (2) are inverse to each other, on-the-nose.
\end{itemize}
\end{proposition}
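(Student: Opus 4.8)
The plan is to prove the three clauses in order, extracting clause~(3) from the explicit formulas produced while proving~(1) and~(2). For clause~(1), starting from a semistrict monoidal $2$-category $(\bicat{M},1,\otimes,\{\Phi\})$ I keep the strict $2$-category, the unit $1$, and the strict functors $L_A=A\otimes-$, $R_A=-\otimes A$ unchanged, and set $\phi_{f,g}:=\Phi_{(f,\id),(\id,g)}$. It then remains only to verify the interchangor axioms (iv)--(ix), and each is short: axiom (v) (nudging), i.e.\ $f\otimes g=(\id_{A'}\otimes g)\circ(f\otimes\id_B)$, holds because $\Phi_{(\id_{A'},g),(f,\id_B)}=\id$ by the cubical condition; axiom (vii), $\phi_{f,\id}=\phi_{\id,g}=\id$, is again the cubical condition; (vi) follows from naturality of $\Phi$ together with the strictness of $L_A$ and $R_A$; (viii) is naturality of $\Phi$; and (ix), together with its rotation, is the pseudofunctor associativity coherence for $\Phi$ applied to the relevant three-fold composites in $\bicat{M}\times\bicat{M}$, every intermediate coherence cell carrying an identity leg and hence being trivial.

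For clause~(2), starting from a stringent monoidal $2$-category $(\bicat{M},1,\otimes,\{\phi_{f,g}\})$ I define, for $f\colon A\to A'$, $f'\colon A'\to A''$, $g\colon B\to B'$, $g'\colon B'\to B''$,
\[
\Phi_{(f',g'),(f,g)}\ :=\ \id_{\id_{A''}\otimes g'}\ \ast\ \phi_{f',g}\ \ast\ \id_{f\otimes\id_B},
\]
which in wire diagrams is simply ``apply the interchangor $\phi_{f',g}$ in the middle band''. First I check that the source and target are as required: nudging (v) gives $f'\otimes g'=(\id\otimes g')\circ(f'\otimes\id)$ and $f\otimes g=(\id\otimes g)\circ(f\otimes\id)$, while nudging followed by the strictness of $L_{A''}$ and $R_B$ gives $f'f\otimes g'g=(\id\otimes g')\circ(\id\otimes g)\circ(f'\otimes\id)\circ(f\otimes\id)$, and the whiskered $\phi_{f',g}$ bridges exactly the two middle factors. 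Then I must check that this $\Phi$ makes $\bicat{M}$ semistrict: invertibility is immediate; the cubical condition $\Phi_{(\id,g'),(f,g)}=\id=\Phi_{(f',g'),(f,\id)}$ follows from axiom (vii); naturality in all $1$- and $2$-morphism arguments follows from axiom (viii) plus functoriality; the monoid (strict associativity and unit) conditions reduce to axioms (iii) and (vi); and the unit coherence cells are automatically trivial, as observed just after the definition of a cubical functor. The one substantive point, and the main obstacle, is the pseudofunctor associativity (pentagon) axiom for $\Phi$: expanding each of its four occurrences of $\Phi$ via the formula above turns the pentagon into an equation among whiskered interchangors, which I will deduce from axiom (ix) (and its rotation) together with (vi). In wire diagrams this manipulation is mechanical, but it is genuinely where axiom (ix) enters; indeed (ix) is chosen precisely so that this pentagon holds.

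For clause~(3) I treat the two round trips separately. For ``stringent $\to$ semistrict $\to$ stringent'': restricting the $\Phi$ built in clause~(2) forms $\Phi_{(f,\id),(\id,g)}$, and the defining formula immediately yields $\id_{\id\otimes\id}\ast\phi_{f,g}\ast\id_{\id\otimes\id}=\phi_{f,g}$, since $\id_A\otimes\id=\id$ and whiskering by the identity $2$-morphism of an identity $1$-morphism does nothing. For ``semistrict $\to$ stringent $\to$ semistrict'': I must show that an arbitrary coherence datum $\Phi$ coincides with the reconstruction $\id_{\id\otimes g'}\ast\Phi_{(f',\id),(\id,g)}\ast\id_{f\otimes\id}$ built from its own interchangor --- this is exactly Lemma~\ref{little_lem}, and the argument below is the graphical proof of it promised earlier. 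Write $(f',g')\circ(f,g)$ in $\bicat{M}\times\bicat{M}$ as the four-fold composite $(\id,g')\circ(f',\id)\circ(\id,g)\circ(f,\id)$ and apply the coherence of the pseudofunctor $\otimes$ to this string. Bracketed as $((\id,g')\cdot(f',\id))\cdot((\id,g)\cdot(f,\id))$, the two outer coherence cells $\Phi_{(\id,g'),(f',\id)}$ and $\Phi_{(\id,g),(f,\id)}$ vanish by the cubical condition (identity first leg), so the total cell is $\Phi_{(f',g'),(f,g)}$; bracketed as $((\id,g')\cdot((f',\id)\cdot(\id,g)))\cdot(f,\id)$, the two cells $\Phi_{(\id,g'),(f',g)}$ and $\Phi_{(f',g'g),(f,\id)}$ vanish (identity first leg, respectively identity second leg), leaving the middle cell $\Phi_{(f',\id),(\id,g)}=\phi_{f',g}$ whiskered outside by $\id_{\id_{A''}\otimes g'}$ and $\id_{f\otimes\id_B}$. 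Coherence of $\otimes$ forces the two computations to agree, which is Lemma~\ref{little_lem}. Since each round trip returns the original data verbatim, with no mediating isomorphism, the processes of~(1) and~(2) are mutually inverse on the nose.
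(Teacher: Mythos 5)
Your proposal is correct and follows essentially the same route as the paper: the same formula $\Phi_{(f',g'),(f,g)}=\id_{\id\otimes g'}\ast\phi_{f',g}\ast\id_{f\otimes\id}$ for clause (2), the same axiom-by-axiom bookkeeping for clause (1), and for clause (3) the same uniqueness argument — applying pseudofunctor coherence to the decomposition $(f',g')\circ(f,g)=(\id,g')\circ(f',\id)\circ(\id,g)\circ(f,\id)$ and letting the cubical condition kill every coherence cell except $\phi_{f',g}$, which is exactly what the paper's commutative diagram of coherence squares encodes.
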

\begin{proof} 1. Axiom (iii) follows since $\otimes$ is a cubical functor, from which it follows that $L_A := A \otimes - $ and $L_B := - \otimes B$ are strict 2-functors.  The equation $L_A L_B = L_{A \otimes B}$ follows from the associativity equation coming from $\bicat{M}$ being a monoid in $\text{2Cat}^{\text{ps}}$. Similarly $R_B R_A = R_{A \otimes B}$, as well as $L_A R_B = R_B L_A$. 

Axiom (v) follows from the cubical identity, as explained above. Axiom (vi) follows from $\bicat{M}$ being a monoid in $\text{2Cat}^\text{ps}$. Axiom (vii) follows from the cubical equation. Axiom (viii) follows from the naturality of $\Phi_{(f',g'), (f,g)}$. Axiom (ix) follows from the coherence equation on $\Phi_{(f',g'), (f,g)}$.

2. We define 
\begin{equation} \label{defn_inter}
\Phi_{(f',g'), (f,g)} := 
\begin{tz}[xscale=2.6, yscale=8] 
\node (1) at (0,0)
{
$\begin{tz}[xscale=0.7, yscale=0.7]
  \draw(0,0) to (0,2.5) node[morphlabel] {$f$} to (0,4) node[morphlabel] {$f'$} to (0,6.5);
  \draw(1,0) to (1,2.5) node[morphlabel] {$g$} to (1,4) node[morphlabel] {$g'$} to (1,6.5);
\end{tz}$
}; 

\node (2) at (1,0)
{
$\begin{tz}[xscale=0.7, yscale=0.7]
  \draw(0,0) to (0,1) node[morphlabel] {$f$} to (0,2.5) node[morphlabel] {$\id$} to (0,4) node[morphlabel] (f') {$f'$} to (0,5.5) node[morphlabel] {$\id$} to (0,6.5);
  \draw(1,0) to (1,1) node[morphlabel] {$\id$} to (1,2.5) node[morphlabel] (g) {$g$} to (1,4) node[morphlabel] {$\id$} to (1,5.5) node[morphlabel] {$g'$} to (1,6.5);
	\draw[green, inner sep=1pt] ([shift={(135:0.1)}] f'.135) rectangle ([shift={(-45:0.1)}] g.-45); 
\end{tz}$
}; 

\node (3) at (2,0)
{
$\begin{tz}[xscale=0.7, yscale=0.7]
  \draw(0,0) to (0,1) node[morphlabel] {$f$} to (0,2.5) node[morphlabel] {$f'$} to (0,4) node[morphlabel] (f') {$\id$} to (0,5.5) node[morphlabel] {$\id$} to (0,6.5);
  \draw(1,0) to (1,1) node[morphlabel] {$\id$} to (1,2.5) node[morphlabel] (g) {$\id$} to (1,4) node[morphlabel] {$g$} to (1,5.5) node[morphlabel] {$g'$} to (1,6.5);
\end{tz}$
}; 

\node (4) at (3,0)
{
$\begin{tz}[yscale=0.7]
  \draw(0,0) to node[morphlabel] {$f' \circ f$} (0,6.5);
  \draw(1,0) to node[morphlabel] {$g' \circ g$} (1,6.5);
 \end{tz}$
}; 

\begin{scope}[double arrow scope]
	\draw (1) -- node[above] {$=$} (2);
	\draw (2) -- node[above] {$\phi_{f', g}$} (3);
	\draw (3) -- node[above] {$=$} (4);
\end{scope}

\end{tz}
\end{equation}

The coherence equation that $\Phi$ must satisfy in order for $\otimes \colon \bicat{M} \times \bicat{M} \rightarrow \bicat{M}$ to be a pseudofunctor looks as follows in wire diagrams:
\begin{equation} \label{coherence}
\begin{tz}[xscale=5, yscale=5]
\node (1) at (0,0)
{
$\begin{tz}[xscale=0.7, yscale=0.7]
  \draw(0,0) to (0,1) node[morphlabel] {$f$} to (0,2.5) node[morphlabel] (f') {$f'$} to (0,4) node[morphlabel] (f'') {$f''$} to (0,5);
  \draw(1,0) to (1,1) node[morphlabel] (g) {$g$} to (1,2.5) node[morphlabel] (g') {$g'$} to (1,4) node[morphlabel] {$g''$} to (1,5);
	\draw[green, inner sep=1pt] ([shift={(135:0.2)}] f''.135) rectangle ([shift={(-45:0.2)}] g'.-45);
	\draw[blue, inner sep=1pt] ([shift={(135:0.2)}] f'.135) rectangle ([shift={(-45:0.2)}] g.-45);
\end{tz}$
}; 

\node (2) at (1,0)
{
$\begin{tz}[yscale=0.7]
  \draw(0,0) to (0,1.5) node[morphlabel] {$f$} to (0,3.5) node[morphlabel] {$f''f$} to (0,5);
  \draw(1,0) to (1,1.5) node[morphlabel] {$g$} to (1,3.5) node[morphlabel] {$g''g$} to (1,5);
\end{tz}$
}; 

\node (3) at (0,-1)
{
$\begin{tz}[xscale=0.7, yscale=0.7]
  \draw(0,0) to (0,1.5) node[morphlabel] {$f'f$} to (0,3.5) node[morphlabel] {$f''$} to (0,5);
  \draw(1,0) to (1,1.5) node[morphlabel] {$g'g$} to (1,3.5) node[morphlabel] {$g''$} to (1,5);
\end{tz}$
}; 

\node (4) at (1,-1)
{
$\begin{tz}[xscale=1.2, yscale=0.7]
  \draw(0,0) to node[morphlabel] {$f'' f' f$} (0,5);
  \draw(1,0) to node[morphlabel] {$g'' g' g$} (1,5);
\end{tz}$
};

\begin{scope}[double arrow scope]
	\draw (1) -- node[above, green] {$\Phi_{(f'',g''),(f',g')}$} (2);
	\draw (1) -- node[left, blue] {$\Phi_{(f',g'),(f,g)}$} (3);
	\draw (2) -- node[right] {$\Phi_{(f'' f', g'' g'), (f, g)}$} (4);
	\draw (3) -- node[below] {$\Phi_{(f'', g''), (f'f, g'g)}$} (4);
\end{scope}
\end{tz}
\end{equation}
It is straightforward to verify graphically that the definition \eqref{defn_inter} of $\Phi$ satisfies the above, using axiom (ix). Also, $\Phi$ is natural because of axiom (viii).  This establishes that $(\otimes, \Phi) \colon \bicat{M} \times \bicat{M} \rightarrow \bicat{M}$ is a pseudofunctor.  It is cubical because of axiom (vii). Associativity follows from axiom (ix). 

3. We need to check that $\Phi$ is uniquely determined by its underlying interchangor 2-isomorphisms $\phi$. This follows from the following commutative diagram:
\[
\begin{tz}[xscale=5, yscale=5.5]

\node (1) at (0,0)
{
$\begin{tz}[xscale=0.7, yscale=0.7]
  \draw(0,0) to (0,1.5) node[morphlabel] {$f$} to (0,3.5) node[morphlabel] (f') {$f'$} to (0,5);
  \draw(1,0) to (1,1.5) node[morphlabel] (g) {$g$} to (1,3.5) node[morphlabel] (g') {$g'$} to (1,5);
\end{tz}$
};

\node (2) at (1,0)
{
$\begin{tz}[xscale=0.7, yscale=0.7]
  \draw(0,0) to (0,1) node[morphlabel] {$f$} to (0,2.5) node[morphlabel] (f') {$f'$} to (0,4) node[morphlabel] (id) {$\id$} to (0,5);
  \draw(1,0) to (1,1) node[morphlabel] (g) {$g$} to (1,2.5) node[morphlabel] (id2) {$\id$} to (1,4) node[morphlabel] {$g'$} to (1,5);
   \draw[green, inner sep=1pt] ([shift={(135:0.2)}] id.135) rectangle ([shift={(-45:0.2)}] id2.-45);
   \draw[blue, inner sep=1pt] ([shift={(135:0.2)}] f'.135) rectangle ([shift={(-45:0.2)}] g.-45);
\end{tz}$
}; 

\node (3) at (2,0)
{
$\begin{tz}[xscale=0.7, yscale=0.7]
  \draw(0,0) to (0,1) node[morphlabel] {$f$} to (0,2.5) node[morphlabel] (id) {$\id$} to (0,4) node[morphlabel] (f') {$f'$} to (0,5.5) node[morphlabel] {$\id$} to (0,6.5);
  \draw(1,0) to (1,1) node[morphlabel] {$\id$} to (1,2.5) node[morphlabel] (g) {$g$} to (1,4) node[morphlabel] {$\id$} to (1,5.5) node[morphlabel] {$g'$} to (1,6.5);
   \draw[green, inner sep=1pt] ([shift={(135:0.2)}] f'.135) rectangle ([shift={(-45:0.2)}] g.-45);
\end{tz}$
}; 

\node (4) at (0,-1)
{
$\begin{tz}[xscale=0.7, yscale=0.7]
  \draw(0,0) to node[morphlabel] {$f'f$} (0,5);
  \draw(1,0) to node[morphlabel] {$g'g$} (1,5);
\end{tz}$
}; 

\node (5) at (1,-1)
{
$\begin{tz}[xscale=0.7, yscale=0.7]
  \draw(0,0) to (0,1.5) node[morphlabel] {$f'f$} to (0,3.5) node[morphlabel] (f') {$\id$} to (0,5);
  \draw(1,0) to (1,1.5) node[morphlabel] {$g$} to (1,3.5) node[morphlabel] (g') {$g'$} to (1,5);
\end{tz}$
}; 

\node (6) at (2,-1)
{
$\begin{tz}[xscale=0.7, yscale=0.7]
  \draw(0,0) to (0,1) node[morphlabel] {$f$} to (0,2.5) node[morphlabel] (f') {$f'$} to (0,4) node[morphlabel] (id) {$\id$} to (0,5);
  \draw(1,0) to (1,1) node[morphlabel] (id2) {$\id$} to (1,2.5) node[morphlabel] (g) {$g$} to (1,4) node[morphlabel] {$g'$} to (1,5);
   \draw[green, inner sep=1pt] ([shift={(135:0.2)}] f'.135) rectangle ([shift={(-45:0.2)}] id2.-45);
\end{tz}$
}; 

\draw[double] (2) -- (3);

\begin{scope}[double arrow scope]
	\draw (2) -- node[above, green] {$\Phi_{(\id,g'),(f',\id)}$} (1);
	\draw (1) -- node[right] {$\Phi_{(f',g'),(f,g)}$} (4);
	\draw (3) -- node[left] {$\Phi_{(f',\id),(\id,g)}$} (6);
	\draw (6) -- node[below] {$\Phi_{(f',g),(f, \id)}$} (5);
	\draw (5) -- node[below] {$\Phi_{(\id,g'),(f'f, g)}$} (4);
	\draw (2) -- node[right, blue] {$\Phi_{(f', \id),(f, g)}$} (5);
	
\end{scope}
\end{tz}
\]
Each square is an instance of \eqref{coherence} and hence commutes. Due to the cubical condition, all the $\Phi$ terms are the identity except for the one on the far right, hence we have $\Phi_{(f',g'),(f,g)} = \phi_{f',g}$. In other words, the formula \eqref{defn_inter} holds in every semistrict monoidal 2-category. 
\end{proof}

\section{Stringent symmetric monoidal 2-categories} \label{stringent_sym_sec}
In this section we define {\em stringent symmetric monoidal 2-categories}, and extend the wire diagram calculus to them. 

\subsection{The definition}

\begin{defn} A {\em stringent symmetric monoidal 2-category} $\left(\bicat{M}, \, 1, \, \otimes, \, \{\phi_{f,g} \}, \, \{\beta_{A,B} \} \right)$ is a stringent monoidal 2-category $\left(\bicat{M}, 1, \otimes \{\phi_{f,g}\}\right)$ equipped with, for every pair of objects $A, B \in \mathcal{M}$, a 1-morphism
\[
\beta_{A,B} \colon A \otimes B \rightarrow B \otimes A, \quad \text{drawn as } \,\, \begin{tz}[xscale=0.7]  \draw(0,0) node[below] {$A$} to[out=up, in=down] (1,1) node[above]{B};  \draw (1,0) node[below] {$B$} to[out=up, in=down] (0,1) node[above] {$A$}; \end{tz}
\]
satisfying the following equations between 1-morphisms {\em on-the-nose}:
\begin{itemize}
\item[(i)] $\beta_{A,B} \circ \beta_{B,A} = \id_{A \otimes B}$. In wire diagrams:
\[
\begin{tz}[xscale = 0.7]
	\draw (0,0) node[below] {$A$} to[out=up, in=down] (1,1) to[out=up, in=down] (0,2) node[above] {$A$};
	\draw (1,0) node[below] {$B$} to[out=up, in=down] (0,1) to[out=up, in=down] (1,2) node[above] {$B$};
\end{tz}
\,\,=\,\,
\begin{tz}[xscale = 0.7]
	\draw (0,0) node[below] {$A$} to (0,2) node[above] {$A$};
	\draw (1,0) node[below] {$B$} to (1,2) node[above] {$B$};
\end{tz}
\]
\item[(ii)]  $\beta_{A \otimes B, C} = (\beta_{(A,C)} \otimes \id) \circ (\id \otimes \beta_{B, C})$. In wire diagrams:
\[
\begin{tz}[xscale=0.7]
	\draw (2,0) node[below] {$C$} to[out=up, in=down] (0,2) node[above] {$C$};
	\draw[fill=white, draw=white] (0.5, 1.2) rectangle (1.5, 0.8);
	\draw (0,0) node[below] {$A$} to[out=up, in=down] (1,2) node[above] {$A$};
	\draw (1,0) node[below] {$B$} to[out=up, in=down] (2,2) node[above] {$B$};
\end{tz}
\,\, = \,\,
\begin{tz}[xscale=0.7]
	\draw (2,0) node[below] {$C$} to[out=up, in=down] (0,2) node[above] {$C$};
	\draw (0,0) node[below] {$A$} to[out=up, in=down] (1,2) node[above] {$A$};
	\draw (1,0) node[below] {$B$} to[out=up, in=down] (2,2) node[above] {$B$};
\end{tz}
\]
There is a similar equation for $\beta_{A, B \otimes C}$.
\item[(iii)] If $f \colon A \rightarrow A'$ is a 1-morphism, then $\beta_{A', B} \circ (f \otimes \id_B) = (\id_{A} \otimes f) \circ \beta_{A,B}$. In wire diagrams:
\[
\begin{tz}[xscale=0.7]
	\draw (0,0) node[below] {$A$} to node[morphlabel] {$f$} (0,1) to[out=up, in=down] (1,2) to (1,3) node[above] {$A'$};
	\draw (1,0) node[below] {$B$} to (1,1) to[out=up, in=down] (0,2) to (0,3) node[above] {$B$};
\end{tz} \,\, = \,\,
\begin{tz}[xscale=0.7]
	\draw (0,0) node[below] {$A$} to (0,1) to[out=up, in=down] (1,2) to node[morphlabel] {$f$}  (1,3) node[above] {$A'$};
	\draw (1,0) node[below] {$B$} to (1,1) to[out=up, in=down] (0,2) to (0,3) node[above] {$B$};
\end{tz}
\]
There is a similar equation for $g \colon B \rightarrow B'$.
\end{itemize}
Moreover, we require the following equation between 2-morphisms:
\begin{itemize}
 \item[(iv)] For every 1-morphism $f \colon A \rightarrow A'$ and every pair of objects $B,C$, $\phi_{f, \beta_{B,C}} = \id$. In wire diagrams:
 \[
\begin{tz}[xscale=3] 
\node(1) at (0,0)
{
$\begin{tz}[xscale=0.7]
	\draw (0,0) node[below] {$A$} to (0, 1) to node[morphlabel] {$f$} (0,2) node[above] {$A'$};
	\draw (1,0) node[below] {$B$} to[out=up, in=down] (2, 1) to (2,2) node[above] {$B$};
	\draw (2,0) node[below] {$C$} to[out=up, in=down] (1,1) to (1,2) node[above] {$C$};
\end{tz}$
};

\node(2) at (1,0)
{
$\begin{tz}[xscale=0.7]
	\draw (0,0) node[below] {$A$} to node[morphlabel] {$f$} (0,1) to (0,2) node[above] {$A'$};
	\draw (1,0) node[below] {$B$} to (1,1) to[out=up, in=down] (2, 2) node[above] {$B$};
	\draw (2,0) node[below] {$C$} to (2,1) to[out=up, in=down] (1,2) node[above] {$C$};
\end{tz}$
};

\begin{scope}[double arrow scope]
	\draw (1) -- node[above] {$\phi_{f, \beta_{B,C}}$} (2);
\end{scope}

\end{tz}
\,\, = \,\,
\begin{tz}[xscale=3] 
\node(1) at (0,0)
{
$\begin{tz}[xscale=0.7]
	\draw (0,0) node[below] {$A$} to (0, 1) to node[morphlabel] {$f$} (0,2) node[above] {$A'$};
	\draw (1,0) node[below] {$B$} to[out=up, in=down] (2, 1) to (2,2) node[above] {$B$};
	\draw (2,0) node[below] {$C$} to[out=up, in=down] (1,1) to (1,2) node[above] {$C$};
\end{tz}$
};

\node(2) at (1,0)
{
$\begin{tz}[xscale=0.7]
	\draw (0,0) node[below] {$A$} to node[morphlabel] {$f$} (0,1) to (0,2) node[above] {$A'$};
	\draw (1,0) node[below] {$B$} to (1,1) to[out=up, in=down] (2, 2) node[above] {$B$};
	\draw (2,0) node[below] {$C$} to (2,1) to[out=up, in=down] (1,2) node[above] {$C$};
\end{tz}$
};

\begin{scope}[double arrow scope]
	\draw (1) -- node[above] {$\id$} (2);
\end{scope}

\end{tz}
\] 
Similarly, $\phi_{\beta_{A,B}, g} = \id$ for every $g \colon C \rightarrow C'$.
\end{itemize}
\end{defn}
Having given the definition, note that the naturality condition (iii) on $\beta_{A,B}$ does not hold for tensor products! In general, we need to insert the interchangor $\phi$ in order to commute $f\otimes g$ past the braiding:
\begin{equation} \label{multi-braid}
\begin{tz}[xscale=0.7]
	\draw (0,0) node[below] {$A$} to node[morphlabel] {$f$} (0,1) to[out=up, in=down] (1,2) to (1,3) node[above] {$A'$};
	\draw (1,0) node[below] {$B$} to node[morphlabel] {$g$} (1,1) to[out=up, in=down] (0,2) to (0,3) node[above] {$B'$};
\end{tz}
\,\, = \,\,
\begin{tz}[xscale=0.7]
	\draw (0,0) node[below] {$A$} to (0, 0.3) node[morphlabel] {$f$} to (0,1) to[out=up, in=down] (1,2) to (1,3) node[above] {$A'$};
	\draw (1,0) node[below] {$B$} to (1, 0.8) node[morphlabel] {$g$} to (1,1) to[out=up, in=down] (0,2) to (0,3) node[above] {$B'$};
\end{tz}
\,\, = \,\,
\begin{tz}[xscale=0.7]
	\draw (0,0) node[below] {$A$} to (0, 0.3) node[morphlabel] {$f$} to (0,1) to[out=up, in=down] (1,2) to (1,3) node[above] {$A'$};
	\draw (1,0) node[below] {$B$} to (1, 0.7) to (1,1) to[out=up, in=down] (0,2) to node[morphlabel] {$g$} (0,3) node[above] {$B'$};
\end{tz}
\,\, = \,\,
\begin{tz}[xscale=0.7]
	\draw (0,0) node[below] {$A$} to (0,1) to[out=up, in=down] (1,2) to (1,2.2) node[morphlabel] {$f$} to (1,3) node[above] {$A'$};
	\draw (1,0) node[below] {$B$} to (1,1) to[out=up, in=down] (0,2) to (0,2.7) node[morphlabel] {$g$} to (0,3) node[above] {$B'$};
\end{tz}
\,\, \stackrel{\raisebox{0.4em}{$\phi_{g,f}$}}{\Longrightarrow} \,\,
\begin{tz}[xscale=0.7]
	\draw (0,0) node[below] {$A$} to (0,1) to[out=up, in=down] (1,2) to (1,2.5) node[morphlabel] {$f$} to (1,3) node[above] {$A'$};
	\draw (1,0) node[below] {$B$} to (1,1) to[out=up, in=down] (0,2) to (0,2.5) node[morphlabel] {$g$} to (0,3) node[above] {$B'$};
\end{tz}
\end{equation}

\subsection{Example} This example is adapted from~\cite[Example 2.30]{CSPthesisLatest}; see also~~\cite{gk14-sep,hs05-qf, s12-hthc, jo12-mso}. Let $\mathbb{S}$ be the {\em sphere spectrum}, so that $\pi_i(\mathbb{S})$ are the stable homotopy groups of spheres:
\[
 \pi_0^\text{st} = \mathbb{Z}, \quad \pi_1^\text{st} = \mathbb{Z}/2, \quad \pi_2^\text{st} = \mathbb{Z}/2, \quad \pi_3^\text{st}=\mathbb{Z}/24, \quad \ldots
\]
We can conceive of the truncation $\mathbb{S}_{[0,2]}$ for $0 \leq i \leq 2$ as a quasistrict symmetric monoidal 2-category \bicat{Q}, as follows. The objects of \bicat{Q} are the integers $\mathbb{Z}$. The hom-categories are given by
\[
 \Hom_\bicat{Q} (m, n) = \begin{cases} P & \text{ if $m$=$n$ } \\ \text{empty} & \text{ otherwise.} \end{cases}
\]
Here, $P$ is a skeletal version of the Picard category $\text{Pic}^{\mathbb{Z}/2} (\mathbb{Z})$ whose objects are $\mathbb{Z}/2$-graded  free abelian groups of total rank 1 and whose morphisms are invertible graded homomorphisms, with the usual $\mathbb{Z}/2$-graded tensor product and the Koszul rule for the symmetry~\cite{gk14-sep}. So, $P$ has two objects $0$ and $1$, and each object has two automorphisms, $I$ and $-I$. The tensor product in $P$ is given on objects by addition mod 2, and on morphisms by multiplication. The braiding $b$ on the symmetric monoidal category $P$ is given by the Koszul rule, with the only nonidentity braiding given by $b_{1, 1} = -I$.

We reindex $P$ so as to form part of the 2-category $\bicat{Q}$. So, composition of 1-morphisms in $\mathbb{Q}$ corresponds to tensor product inside $P$.

The tensor product on $\bicat{Q}$ is given on objects by addition in $\mathbb{Z}$, and on 1- and 2-morphisms by tensor product in $P$. Let $(m,i)$ be $(n,j)$ with $m,n \in \mathbb{Z}$ and $i,j \in \{0,1\}$ be automorphisms of $m$ and $n$ in $\bicat{B}$ respectively. The interchangor 
\[
 \phi_{(m, i), (n, j)} \colon i + j  \rightarrow i + j
\]   
is defined to be the braiding $b_{i,j}$ inside the symmetric monoidal category $P$.  These constructions equip $\bicat{Q}$ as a stringent monoidal 2-category.

The braiding 1-morphisms 
\[
  \beta_{m,n} \colon m+n \rightarrow m+n
\]
in $\bicat{Q}$ are defined as $\beta_{m,n} = m + n$ (mod $2$). This completes the description of $\bicat{Q}$ as a stringent symmetric monoidal 2-category.

\subsection{Equivalence with semistrict symmetric monoidal 2-categories}
We now recall the definition of a quasistrict symmetric monoidal 2-category, and prove that they are equivalent to stringent ones. The following definition is taken from~\cite{CSPthesisLatest, cr98-gcb} and builds on the definition of a symmetric monoidal bicategory from~\cite{CSPthesisLatest, st13-ccb}. The reader is referred to these references for the the definitions of the braiding `bilinearators' $R_{A,B|C}$ and $S_{A | B,C}$ etc. 

\begin{defn}[~\cite{CSPthesisLatest, cr98-gcb}] A {\em Crans semistrict symmetric monoidal 2-category} is a symmetric monoidal bicategory $\bicat{M}$ such that:
\begin{itemize}
 \item[(CSS.1)] The underlying monoidal bicategory is a Gray monoid; 
 \item[(CSS.2)] The following additional normalization conditions apply:
  \begin{itemize}
   \item [(a)] The 1-morphisms $\beta_{1, x}$ and $\beta_{x,1}$ are identity morphisms on $A$, for each object $A \in \bicat{M}$.
   \item [(b)] The isomorphisms $R_{1,A|B}$, $R_{A,1|B}$, $S_{A|1,B}$, and $S_{A|B,1}$ are the identity 2-isomorphism of $\beta_{A,B}$.
 \item[(c)] The isomorphisms $R_{A,B|1}$ and $S_{1|A,B}$ are the identity 2-isomorphisms of $I_{A \otimes B}$.  
  \end{itemize}
\end{itemize}
\end{defn}
A Crans semistrict symmetric monoidal 2-category comes equipped with 2-isomorphisms
\begin{equation} \label{symettrators}
 \sigma_{A,B} \colon \id_{A \otimes B} \Rightarrow \beta_{B,A} \circ \beta_{A,B}
\end{equation}
for each pair of objects $A,B \in \bicat{M}$, which witness the fact that the braiding is symmetric. 
\begin{defn}[~\cite{CSPthesisLatest}] A {\em quasistrict symmetric monoidal 2-category} is a Crans semistrict symmetric monoidal 2-category $\bicat{M}$ such that:
\begin{itemize}
 \item [(QS.1)] The modifications $R$, $S$, and $\sigma$ are identities. 
 \item [(QS.2)] For the transformation $\beta = (\beta_{A,B}, \, \beta_{f,g})$, the component $\beta_{f,g}$ is the identity if either $f$ or $g$ is an identity morphism.
\item[(QS.3)] The 2-morphism witnessing naturality:
 \[
  \Phi_{(f',g'), (f,g)} : (f' \otimes g') \circ (f \otimes g) \Rightarrow (f' \circ f) \otimes (g' \circ g)
 \]
is an identity if either $f'$ or $g$ is a component of $\beta$, i.e. if $f' =\beta_{A,B}$ or $g = \beta_{A,B}$, for some pair of objects $A,B \in \bicat{M}$.
\end{itemize}
\end{defn}
\noindent Recall Theorem \ref{csp_qs_theorem} of Schommer-Pries from the Introduction:
\theoremstyle{plain}
\newtheorem*{thm:csp-theorem}{Theorem \ref{csp_qs_theorem}}
\begin{thm:csp-theorem}[~\cite{CSPthesisLatest}] Every symmetric monoidal bicategory is equivalent to a quasistrict symmetric monoidal 2-category. 
\end{thm:csp-theorem}
We now show that quasistrict symmetric monoidal 2-categories are equivalent to stringent symmetric monoidal 2-categories. 
\begin{proposition} \label{stringentsymmprop} \begin{itemize}
 \item [1.] If $\left(\bicat{M}, \, 1, \, \otimes, \, \{\Phi_{(f',g'),(f,g)}\}, \, \{\beta_{A,B}\}, \{\beta_{f,g}\}\right)$ is a quasistrict monoidal 2-category, then restricting to the underlying interchangor 2-isomorphisms $\phi_{f,g}$ gives a stringent symmetric monoidal 2-category $\left(\bicat{M}, \, 1, \, \otimes, \, \{\phi_{f,g} \}, \, \{\beta_{A,B} \} \right)$. 
 \item [2.] If $\left(\bicat{M}, \, 1, \, \otimes, \, \{\phi_{f,g} \}, \, \{\beta_{A,B} \} \right)$ is a stringent symmetric monoidal 2-category, then coherence isomorphisms $\beta_{f,g}$ can be introduced, and the interchangor 2-isomorphisms $\phi_{f,g}$ can be extended to coherence isomorphisms $\Phi_{(f',g'),(f,g)}$, so as to make $\bicat{M}$ into a quasistrict symmetric monoidal 2-category.
 \item [3.] The processes in (1) and (2) are inverse to each other, on-the-nose.
\end{itemize}
\end{proposition}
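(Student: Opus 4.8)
The plan is to mirror the proof of Proposition \ref{stringentprop} as closely as possible, layering the braiding data on top of the monoidal data, and to do the verifications in the wire diagram calculus.

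\emph{Part 1.} Proposition \ref{stringentprop}(1) already supplies the underlying stringent monoidal 2-category, so it remains only to check axioms (i)--(iv) of a stringent symmetric monoidal 2-category. Axiom (i) follows from the symmetrator $\sigma_{A,B}$ of \eqref{symettrators} being the identity (QS.1), which forces $\beta_{B,A}\circ\beta_{A,B}=\id_{A\otimes B}$. Axiom (ii) follows from the bilinearators $R$ and $S$ being identities (QS.1), forcing the two composites of braidings to agree on the nose. Axiom (iii) is the statement that $\beta_{f,\id}$ and $\beta_{\id,g}$ are identities, which is exactly condition QS.2. Finally axiom (iv), $\phi_{f,\beta_{B,C}}=\id$ and $\phi_{\beta_{A,B},g}=\id$, follows from QS.3 together with the definition $\phi_{f,g}=\Phi_{(f,\id),(\id,g)}$ of \eqref{interchange}: in each case one of the two distinguished arguments of the relevant $\Phi$ is a component of $\beta$, so $\Phi$, and hence $\phi$, is the identity.

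\emph{Part 2.} Proposition \ref{stringentprop}(2) first promotes the stringent monoidal structure to a semistrict monoidal 2-category via the formula \eqref{defn_inter}. On top of this I would introduce the braiding coherence data by: taking $\beta_{f,g}$ to be the composite 2-isomorphism exhibited by the chain \eqref{multi-braid}, so that, up to the 1-morphism identities coming from nudging and axiom (iii), $\beta_{f,g}=\phi_{g,f}$; taking all four (normalized) bilinearators $R_{A,B|C}$, $S_{A|B,C}$ to be identities, which is legitimate since axioms (i)--(ii) make their sources and targets coincide on the nose; and taking $\sigma_{A,B}$ to be the identity, legitimate by axiom (i). One must then verify that these choices satisfy all the coherence axioms of a symmetric monoidal bicategory from \cite{CSPthesisLatest, st13-ccb}, and that conditions (CSS.1)--(CSS.2) and (QS.1)--(QS.3) hold. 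The normalization conditions and (QS.1) hold by construction; (QS.2) holds since $\beta_{\id,g}=\phi_{g,\id}=\id$ and $\beta_{f,\id}=\phi_{\id,f}=\id$ by axiom (vii); and (QS.3) holds since by \eqref{defn_inter} each such $\Phi$ is built out of an interchangor one of whose arguments is a component of $\beta$, which is the identity by axiom (iv). The remaining symmetric monoidal bicategory axioms reduce, in wire diagrams, to consequences of the stringent axioms---above all (viii), (ix) and (iv)---and I expect this bookkeeping to be the main obstacle, precisely because it is the entire ``host of additional coherence data'' that is being absorbed here.

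\emph{Part 3.} Going stringent $\to$ quasistrict $\to$ stringent returns the original $\phi$, because \eqref{defn_inter} specializes to $\Phi_{(f,\id),(\id,g)}=\phi_{f,g}$, and returns $\beta_{A,B}$ untouched. Going quasistrict $\to$ stringent $\to$ quasistrict returns the original $\Phi$ by Proposition \ref{stringentprop}(3) (the formula \eqref{defn_inter} holds in every semistrict monoidal 2-category, hence in particular in the underlying semistrict structure), returns $\beta_{A,B}$ untouched, and returns the original $\beta_{f,g}$ provided one checks that in any quasistrict symmetric monoidal 2-category the naturator $\beta_{f,g}$ is already given by the chain \eqref{multi-braid}; this is a diagram chase using pseudonaturality of $\beta$ together with (QS.2) and (QS.3) to strip $f\otimes g$ down to pieces on which $\beta_{-,-}$ is forced to be trivial, leaving only $\phi_{g,f}$---the symmetric analogue of the commuting diagram closing the proof of Proposition \ref{stringentprop}.
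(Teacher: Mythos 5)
Your proposal is correct and follows essentially the same route as the paper: part 1 matches axiom-by-axiom the correspondence ($\sigma=\id$ gives (i), $R,S=\id$ give (ii), (QS.2) gives (iii), (QS.3) gives (iv)); part 2 defines $\beta_{f,g}$ by the chain \eqref{multi-braid}/\eqref{multi-braid2} exactly as the paper does; and part 3 is the paper's argument of specializing the pseudonaturality coherence equation for $\beta$ at identity morphisms and invoking (QS.2) to force $\beta_{f,g}$ to equal the interchangor composite. The only differences are cosmetic --- you spell out slightly more of the ``routine'' coherence bookkeeping in part 2 than the paper does, and your sign/index convention ($\phi_{g,f}$ versus the paper's $\phi_{f,g}^{-1}$ in \eqref{multi-braid2}) is an immaterial bookkeeping discrepancy.
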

\begin{proof}
1. The assertion that $\sigma$ is the identity gives Axiom (i) of a stringent symmetric monoidal 2-category.  Similarly the assertion that $R$ and $S$ are identities gives Axiom (ii). Axiom (iii) follows from (QS.2), and Axiom (iv) follows from (QS.3). 
\vskip 0.2cm
2. We have already defined how to extend $\phi_{f,g}$ to $\Phi_{(f',g'),(f,g)}$ in \eqref{defn_inter}. We define $\beta_{f,g}$ by running \eqref{multi-braid} in reverse. That is, we define
\[
\begin{tz}[xscale=3]

\node (1) at (0,0)
{
$\begin{tz}[xscale=0.7]
	\draw (0,0) node[below] {$A$} to (0,1) to[out=up, in=down] (1,2) to (1,2.5) node[morphlabel] {$f$} to (1,3) node[above] {$A'$};
	\draw (1,0) node[below] {$B$} to (1,1) to[out=up, in=down] (0,2) to (0,2.5) node[morphlabel] {$g$} to (0,3) node[above] {$B'$};
\end{tz}$
}; 

\node (2) at (1,0)
{
$\begin{tz}[xscale=0.7]
	\draw (0,0) node[below] {$A$} to node[morphlabel] {$g$} (0,1) to[out=up, in=down] (1,2) to (1,3) node[above] {$A'$};
	\draw (1,0) node[below] {$B$} to node[morphlabel] {$f$} (1,1) to[out=up, in=down] (0,2) to (0,3) node[above] {$B'$};
\end{tz}$
}; 

\begin{scope}[double arrow scope]
	\draw (1) -- node[above] {$\beta_{f,g}$} (2);
\end{scope}
\end{tz}
\]  
as the following composite:
\begin{equation} \label{multi-braid2}
\beta_{f,g} := \,
\begin{tz}[xscale=0.7]
	\draw (0,0) node[below] {$A$} to (0,1) to[out=up, in=down] (1,2) to (1,2.5) node[morphlabel] {$g$} to (1,3) node[above] {$A'$};
	\draw (1,0) node[below] {$B$} to (1,1) to[out=up, in=down] (0,2) to (0,2.5) node[morphlabel] {$f$} to (0,3) node[above] {$B'$};
\end{tz}
\,\, \stackrel{\raisebox{0.4em}{$\phi_{f,g}^{-1}$}}{\Longrightarrow} \,\,
\begin{tz}[xscale=0.7]
	\draw (0,0) node[below] {$A$} to (0,1) to[out=up, in=down] (1,2) to (1,2.2) node[morphlabel] {$g$} to (1,3) node[above] {$A'$};
	\draw (1,0) node[below] {$B$} to (1,1) to[out=up, in=down] (0,2) to (0,2.7) node[morphlabel] {$f$} to (0,3) node[above] {$B'$};
\end{tz}
\,\, = \,\,
\begin{tz}[xscale=0.7]
	\draw (0,0) node[below] {$A$} to (0, 0.3) node[morphlabel] {$g$} to (0,1) to[out=up, in=down] (1,2) to (1,3) node[above] {$A'$};
	\draw (1,0) node[below] {$B$} to (1, 0.7) to (1,1) to[out=up, in=down] (0,2) to node[morphlabel] {$f$} (0,3) node[above] {$B'$};
\end{tz}
\,\, = \,\,
\begin{tz}[xscale=0.7]
	\draw (0,0) node[below] {$A$} to (0, 0.3) node[morphlabel] {$g$} to (0,1) to[out=up, in=down] (1,2) to (1,3) node[above] {$A'$};
	\draw (1,0) node[below] {$B$} to (1, 0.8) node[morphlabel] {$f$} to (1,1) to[out=up, in=down] (0,2) to (0,3) node[above] {$B'$};
\end{tz}
\,\, = \,\,
\begin{tz}[xscale=0.7]
	\draw (0,0) node[below] {$A$} to node[morphlabel] {$g$} (0,1) to[out=up, in=down] (1,2) to (1,3) node[above] {$A'$};
	\draw (1,0) node[below] {$B$} to node[morphlabel] {$f$} (1,1) to[out=up, in=down] (0,2) to (0,3) node[above] {$B'$};
	\end{tz}
\end{equation}
It is now routine to show that $\beta_{f,g}$ satisfies all the coherence equations listed in~\cite{CSPthesisLatest} for a quasistrict symmetric monoidal 2-category. Indeed, these equations can be translated into wire diagrams and the proof is entirely graphical. In particular, (QS.1) implies Axioms (i) and (ii), (QS.2) implies Axiom (iii), and (QS.3) implies Axiom (iv). 
\vskip 0.2cm

3. We need to show that $\beta_{f,g}$ is uniquely determined as the composite \eqref{multi-braid2}. Now, $\beta_{f,g}$ are the coherence 2-isomorphisms coming from the fact that $\beta$ is a transformation $\beta \colon \otimes \Rightarrow \otimes \circ \text{swap}$. Hence they satisfy the following coherence equation:
\begin{equation} \label{trans_cohere}
\begin{tz}[xscale=4.5, yscale=6]

\node (1) at (0,0)
{
$\begin{tz}[xscale=1.2]
	\draw (0,0) node[below] {$A$} node (bl) {} to[out=up, in=down] (1,1) -- node[morphlabel] (g) {$g$} (1,2) -- node[morphlabel] (g') {$g'$} (1,3) node[above] {$A''$};
	\draw (1,0) node[below] {$B$} node (br) {} to[out=up, in=down] (0,1) -- node[morphlabel] (f) {$f$} (0,2) -- node[morphlabel] (f') {$f'$} (0,3) node[above] {$B''$};
 	\draw[red, inner sep=1pt] ([shift={(135:0.2)}] f'.135) rectangle ([shift={(-45:0.2)}] g.-45);
 	\draw[green, inner sep=1pt] ([shift={(135:0.3)}] f.135) rectangle ([shift={(-45:0.3)}] 1.2,0.1);
\end{tz}$
};  

\node (2) at (1,0)
{
$\begin{tz}[xscale=1.2]
	\draw (0,0) node[below] {$A$} to[out=up, in=down] (1,1) -- node[morphlabel] (B){$g' \circ g$} (1,3) node[above] {$A''$};
	\draw (1,0) node[below] {$B$} to[out=up, in=down] (0,1) -- node[morphlabel] (A) {$f' \circ f$} (0,3) node[above] {$B''$};
\end{tz}$
}; 

\node (3) at (2,0)
{
$\begin{tz}[xscale=1.2]
	\draw (0,0) node[below] {$A$} -- node[morphlabel] {$g' \circ g$} (0,2) to[out=up, in=down] (1,3) node[above] {$A''$};
	\draw (1,0) node[below] {$B$} -- node[morphlabel] {$f' \circ f$} (1,2) to[out=up, in=down] (0, 3) node[above] {$B''$};
\end{tz}$
}; 

\node (4) at (0.5,-1)
{
$\begin{tz}[xscale=1.2]
	\draw (0,0) node[below] {$A$} -- node[morphlabel] {$g$} (0,1) to[out=up, in=down] (1,2) -- node[morphlabel] (g') {$g'$} (1,3) node[above] {$A''$};
	\draw (1,0) node[below] {$B$} -- node[morphlabel] {$f$} (1,1) to[out=up, in=down] (0,2) -- node[morphlabel] (f') {$f'$} (0,3) node[above] {$B''$};
 	\draw[green, inner sep=1pt] ([shift={(135:0.2)}] f'.north west) rectangle ([shift={(0:0.2)}] g'.north east |- 1,1);
\end{tz}$
}; 

\node (5) at (1.5,-1)
{
$\begin{tz}[xscale=1.2]
	\draw (0,0) node[below] {$A$} -- node[morphlabel] {$g$} (0,1) -- node[morphlabel] (g') {$g'$} (0,2) to[out=up, in=down] (1,3) node[above] {$A''$};
	\draw (1,0) node[below] {$B$} -- node[morphlabel] (f) {$f$} (1,1) -- node[morphlabel] {$f'$} (1,2) to[out=up, in=down] (0,3) node[above] {$B''$};
 	\draw[green, inner sep=1pt] ([shift={(135:0.2)}] g'.north west) rectangle ([shift={(-45:0.2)}] f.south east);
\end{tz}$
};

\begin{scope}[double arrow scope]
	\draw (1) -- node[above, red] {$\Phi_{(f',g'),(f,g)}$} (2);
	\draw (2) -- node[above] {$\beta_{f'f, g'g}$} (3);
	\draw (1) -- node[below left, green] {$\beta_{f,g}$} (4);
	\draw (4) -- node[below] {$\beta_{f',g'}$} (5);
	\draw (5) -- node[below right] {$\Phi_{(g', f'), (g,f)}$} (3);
\end{scope}

\end{tz}
\end{equation}
In \eqref{trans_cohere}, set $g' = \id$ and $f=\id$. Then, using $\beta_{id, g} = \id$ and $\beta_{f', \id} = \id$, we obtain precisely the formula \eqref{multi-braid2} for $\beta_{f,g}$.
\end{proof}

\bibliography{references}

\begin{thebibliography}{10}

\bibitem{bl98-2t}
John Baez and Laurel Langford.
\newblock 2-{T}angles.
\newblock {\em Letters in Mathematical Physics}, 43:187--197, 1998.

\bibitem{BN96}
John~C. Baez and Marin Neuchl.
\newblock Higher-dimensional algebra {I}. {B}raided monoidal 2-categories.
\newblock {\em Advances in Mathematics}, 121(2):196--244, 1996.

\bibitem{PaperIII}
Bruce Bartlett, Christopher Douglas, Christoper Schommer-Pries, and Jamie
  Vicary.
\newblock A classification of 3-dimensional topological quantum field theories
  extended to 1-manifolds.
\newblock In preparation.

\bibitem{PaperII}
Bruce Bartlett, Christopher Douglas, Christoper Schommer-Pries, and Jamie
  Vicary.
\newblock Extended 3-dimensional bordisms as the theory of modular objects.
\newblock In preparation.

\bibitem{PaperI}
Bruce Bartlett, Christopher Douglas, Christoper Schommer-Pries, and Jamie
  Vicary.
\newblock A finite presentation of the 3-dimensional bordism bicategory.
\newblock In preparation.

\bibitem{cr98-gcb}
Sjoerd~E. Crans.
\newblock Generalized centers of braided and sylleptic monoidal 2-categories.
\newblock {\em Advances in Mathematics}, 136(2):183--223, 1998.

\bibitem{ds97-mbh}
Brian Day and Ross Street.
\newblock Monoidal bicategories and hopf algebroids.
\newblock {\em Advances in Mathematics}, 129(1):99--157, 1997.

\bibitem{gk14-sep}
Nora Ganter and Mikhail Kapranov.
\newblock Symmetric and exterior powers of categories.
\newblock {\em Transformation Groups}, 19(1):57--103, 2014.
\newblock Also available as
  \href{http://arxiv.org/abs/1110.4753}{arXiv:1110.4753}.

\bibitem{Gr74}
John~W. Gray.
\newblock {\em Formal Category Theory: Adjointness for 2-Categories}.
\newblock Number 391 in Lecture Notes in Mathematics. Springer, 1974.

\bibitem{Gr76}
John~W. Gray.
\newblock Coherence for the tensor product of 2-categories, and braid groups.
\newblock In S.~Eilenberg A.~Heller, M.~Tierney, editor, {\em Algebra, Topology
  and Category Theory: a collection in honour of Samuel Eilenberg}, pages
  63--76. Academic Press, 1976.

\bibitem{gurskithesis}
Nick Gurski.
\newblock {\em An algebraic theory of tricategories}.
\newblock PhD thesis, University of Chicago, 2006.
\newblock Available online at
  \href{http://www.math.yale.edu/?mg622/tricats.pdf}{http://www.math.yale.edu/?mg622/tricats.pdf}.

\bibitem{gur11-lsc}
Nick Gurski.
\newblock Loop spaces, and coherence for monoidal and braided monoidal
  bicategories.
\newblock {\em Advances in Mathematics}, 5:4225--4265, 2011.

\bibitem{g13-ctd}
Nick Gurski.
\newblock {\em Coherence in three-dimensional category theory}, volume 201 of
  {\em Cambridge tracts in Mathematics}.
\newblock Cambridge University Press, Cambridge, 2013.

\bibitem{go13-ils}
Nick Gurski and Ang\'{e}lica~M. Osorno.
\newblock Infinite loop spaces, and coherence for symmetric monoidal
  bicategories.
\newblock {\em Advances in Mathematics}, 246:1--32, 2013.

\bibitem{hs05-qf}
Michael~J. Hopkins and Isadore~M. Singer.
\newblock Quadratic functions in geometry, topology, and {M}-theory.
\newblock {\em Journal of Differential Geometry}, 70(3):329--452, 2005.

\bibitem{bms13-gdd}
Catherine~Meusberger John~Barrett and Gregor Schaumann.
\newblock Gray categories with duals and their diagrams.
\newblock 2006.
\newblock \href{http://arxiv.org/abs/1211.0529}{arXiv:1211.0529}.

\bibitem{jo12-mso}
Niles Johnson and Ang\'{e}lica Osorno.
\newblock Modeling stable one-types.
\newblock {\em Theory and Applications of Categories}, 26(20):520--537, 2012.

\bibitem{kv94-bm2}
M.~Kapranov and V.~Voevodsky.
\newblock Braided monoidal 2-categories and {M}anin-{S}chechtman higher braid
  groups.
\newblock {\em Journal of Pure and Applied Algebra}, 92(3):241--267, 1994.

\bibitem{kv94-2categories}
Mikhail Kapranov and Vladimir Voevodsky.
\newblock 2-categories and {Z}amolodchikov tetrahedra equations.
\newblock {\em Proc. Sympos. Pure Math}, 56(2):177--259, 1994.

\bibitem{L98}
Tom Leinster.
\newblock Basic bicategories.
\newblock 1998.
\newblock \href{http://arxiv.org/abs/math.CT/9810017}{arXiv:math/9810017}.

\bibitem{m00-bc}
Paddy McCrudden.
\newblock Balanced coalgebroids.
\newblock {\em Theory and Applications of Categories}, (6):71--147, 2000.

\bibitem{GPS95}
A.~J.~Power R.~Gordon and R.~Street.
\newblock Coherence for tricategories.
\newblock {\em Memoirs the American Mathematical Society}, 117, 1995.

\bibitem{CSPthesisLatest}
Christopher Schommer-Pries.
\newblock {\em The Classification of Two-Dimensional Extended Topological Field
  Theories}.
\newblock PhD thesis, Department of Mathematics, University of California,
  Berkeley, 2009.
\newblock Revised version from 2014. Available at
  \href{http://arxiv.org/abs/1112.1000v2}{arXiv:1112.1000v2}.

\bibitem{s12-hthc}
Carlos Simpson.
\newblock {\em Homotopy theory of higher categories}, volume~19 of {\em New
  Mathematical Monographs}.
\newblock Cambridge University Press, 2012.

\bibitem{st13-ccb}
Michael Stay.
\newblock Compact closed bicategories, 2013.
\newblock Available at \href{http://arxiv.org/abs/1301.1053}{arXiv:1301.1053}.

\end{thebibliography}
\bibliographystyle{plain}

\end{document}